\newcommand\numberthis{\addtocounter{equation}{1}\tag{\theequation}}
\newcommand{\beq}{\begin{equation}}
\newcommand{\enq}{\end{equation}}
\def\theequation{\@arabic\c@equation}
\newcommand{\bbR}{{\mathbb{R}}}
\newcommand{\lb}{\label}
\newcommand{\bu}{{\mathbf u}}
\newcommand{\bv}{{\mathbf v}}
\newcommand{\bn}{{\mathbf n}}
\newcommand{\bx}{{\mathbf x}}
\newcommand{\bt}{{\mathbf t}}
\newcommand{\bi}{\bibitem}
\numberwithin{equation}{section}
\renewcommand{\div}{\operatorname{div}}
\newcommand{\nt}{{\nabla^\perp}}
\newcommand{\curl}{\operatorname{curl}}
\theoremstyle{plain}
\newtheorem{theorem}{Theorem}[section]
\newtheorem{assumption}[theorem]{Assumption}
\newtheorem{lemma}[theorem]{Lemma}
\newtheorem{corollary}[theorem]{Corollary}
\newtheorem{proposition}[theorem]{Proposition}
\theoremstyle{definition}
\newtheorem{example}[theorem]{Example}
\newtheorem{remark}[theorem]{Remark}
\begin{document}
\allowdisplaybreaks
\title[Stability criteria 2D $\alpha$-Euler]{Stability criteria for the 2D $\alpha$-Euler equations}

\thanks{Partially supported by  NSF grant DMS-171098, Research Council of the University of Missouri and the Simons Foundation. We would like to thank Samuel Walsh for many helpful discussions.}
\author[Y. Latushkin]{Yuri Latushkin}
\address{Department of Mathematics,
University of Missouri, Columbia, MO 65211, USA}
\email{latushkiny@missouri.edu}
\urladdr{http://www.math.missouri.edu/personnel/faculty/latushkiny.html}
\author[S. Vasudevan]{Shibi Vasudevan}
\address{International Centre for Theoretical Sciences,
Tata Institute of Fundamental Research, Bengaluru, 560089, India}
\email{shibi.vasudevan@icts.res.in}
\date{\today}
\begin{abstract}
We derive analogues of the classical Rayleigh, Fjortoft and Arnold stability and instability theorems in the context of the 2D $\alpha$-Euler equations.
\end{abstract}
\maketitle
\normalsize
\section{Introduction} \lb{s1}
The $\alpha$-Euler equations (along with its variants such as $\alpha$-Navier Stokes, Camassa-Holm equations) were introduced and studied in a series of foundational papers by C.\ Foias, D.\ Holm, J.\ Marsden, T.\ Ratiu, E.\ Titi and others, see for example, \cite{FHT01}, \cite{HMR98}, \cite{HMR98a} and references therein. It has been studied intensively ever since by many authors from a wide variety of research themes looking at issues such as possible route to proving well posedness for the Euler equations (see \cite{MS01, OS01}), investigating the geometric structure of these equations along the lines introduced by V.\ I.\ Arnold for the Euler equations (see \cite{MRS00, S98}), in turbulence modeling (see, for instance, \cite{CFHOTW98, CS04}) and data assimilation (see, for example, \cite{ANT16} and references therein), amongst other topics. In spite of this, surprisingly little is known about stability of steady state solutions to the $\alpha$-Euler equations (see, however, \cite{PS00}), even though stability theory of the classical 2D Euler equations has been extensively studied, see, e.g., \cite{DZ04, Fr, FS01, FSV97, FY99, GW96, SL05, S00, WG98}. 

Our objectives in this paper are to record basic stability results such as the Rayleigh, Fjortoft and Arnold stability theorems for the $\alpha$-Euler equations. The paper is organized as follows. In Section 2 we formulate the $\alpha$-Euler equations and give preliminaries. In Section 3 we give $\alpha$-analogues of the classical Rayleigh and Fjortoft criteria for 2D parallel shear flows in the channel. Arnold's stability theorems are discussed in Section 4. First, we prove the $\alpha$-version of the first and second instability theorems for the case of multi-connected domains. Our formulation of the second stability theorem directly involves the minimal eigenvalue of the respective $\alpha$-version of the Laplacian, and is based on the use of the Rayleigh-Ritz formula. Next, we discuss stability theorems on the 2-torus and on periodic channels and then provide some examples.
\section{Governing equations and basic setup}
The $\alpha$-Euler equations are a regularization of the classical Euler equations. On a domain $D \subset \mathbb R^{2}$, they are given as follows:
\begin{equation}\lb{EE_alpha}
\begin{split}
\bu_t+\big(\bv\cdot\nabla\big)\bu+ \big(\nabla \bv \big)^{\top} \bu + \nabla \pi=0,\\
(1-\alpha^2 \Delta)\bv=\bu , \\
  \div\bu=\div\bv=0,
\end{split}
\end{equation}
where $\bv:D \to \bbR^{2}$ is the so called filtered velocity and $\bu:D \to \bbR^{2}$ is the actual fluid velocity. Here $\alpha > 0$ is a positive number related to the filter of the flow and $\big(\nabla \bv \big)^{\top}$ represents the transpose of the Jacobi matrix of partial derivatives $\partial \bv_i / \partial x_j$. The pressure $\pi:D \to \bbR$ is related to the actual fluid pressure $p:D \to \bbR$ as \[ \pi= p- \frac{1}{2}|\bv|^2-\frac{\alpha^2} {2}|\nabla \bv|^2.\]
On domains $D$ with a boundary $\partial D$, these equations are supplemented by the boundary conditions, see, for example \cite{HMR98} (formula 8.27, page 65),
\begin{equation}\label{eeabc}
\bv \cdot \bn = 0, \quad (\bn \cdot \nabla \bv) \parallel \bn,
\end{equation}
where $\bn$ is the unit outer normal vector on $\partial D$.
It is also possible to impose $\bv=0$ as the boundary condition. We choose to not work with this as it precludes the possibility of working with steady states which are shear flows with non zero velocity on the boundary. 
Formally, putting $\alpha=0$ gives us the Euler equations
\begin{equation}\lb{EE}
\bu_t+\big(\bu\cdot\nabla\big)\bu+\nabla p=0,\quad \div\bu=0,
\end{equation}
 with the boundary condition 
\begin{equation}\label{eebc}
\bu\cdot\bn=0,
\end{equation}
on $\partial D$.

Since $\div \bv=0$, there exists a stream function $\phi$ such that $\bv=-\nt\phi$, where $\nt=(-\partial_y, \partial_x)$.  We introduce the vorticity 
\begin{equation}\label{vortalpha}
\omega=\curl (1-\alpha^{2}\Delta)\bv.
\end{equation}
Applying $\curl$ to \eqref{EE_alpha}, one obtains the $\alpha$-Euler equation in vorticity form,
\begin{equation}\lb{EEV_alpha}
\omega_t+\bv\cdot\nabla\omega=0.
\end{equation}
Using the fact that $\curl (-\nt)\phi=-\Delta\phi$, we get $\omega=-(1-\alpha^{2}\Delta)\Delta\phi=-\Delta \psi$, where we denote by $\psi$, the stream function associated with velocity $\bu$, i.e., $\bu=-\nt\psi$ and $\phi$ and $\psi$ are related via the formula $\psi=(1-\alpha^{2}\Delta)\phi$. We can thus rewrite \eqref{EEV_alpha} in terms of the stream function variables as 
\begin{equation}\lb{EES_alpha}
\Delta\psi_t-\nt\phi\cdot\nabla(\Delta\psi)=\Delta\psi_t-\phi_x\Delta\psi_y+\phi_y\Delta\psi_x=0.
\end{equation}
 
We have the following version of the Kelvin circulation theorem for the $\alpha$ Euler model. The proof is omitted as it is similar to the proof for the Euler case which can be found, for instance, in \cite[pp.21-22]{CM93}.
\begin{lemma}\label{Kelvin_Circ_Thm_Alpha}
Let $C$ be a simple closed contour in the fluid at time $t=0$. Let $C_{t}$ be the contour carried along by the flow following the smoothed velocity $\bv$, i.e $C_{t}= \chi_{t}(C)$, where $\chi_{t}$ is the flow map associated with the velocity $\bv$, where $\bv$ satisfies \eqref{EE_alpha}. The circulation around $C_{t}$ is defined to be 
\begin{equation}\label{circulation_def}
\Gamma_{C_{t}}= \int_{C_{t}} (1-\alpha^{2}\Delta)\bv \cdot d \mathbf s.
\end{equation}
Then $\Gamma_{C_{t}}$ is constant in time. That is, \[\frac{d}{dt}\Gamma_{C_{t}}=0.\] 
\end{lemma}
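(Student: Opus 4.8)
The plan is to follow the classical Kelvin argument, pulling the circulation integral back to the fixed reference curve $C$ along the flow map $\chi_t$ and then differentiating under the integral sign. First I would rewrite the circulation using the second equation in \eqref{EE_alpha}, namely $(1-\alpha^2\Delta)\bv=\bu$, so that
\[
\Gamma_{C_t}=\int_{C_t}\bu\cdot d\mathbf{s}.
\]
Parametrizing $C$ by $a(s)$, $s\in[0,1]$, with $a(0)=a(1)$, the transported curve is $x(s,t)=\chi_t(a(s))$, and since $\chi_t$ is the flow of $\bv$ we have $\partial_t x(s,t)=\bv(x(s,t),t)$. Hence
\[
\Gamma_{C_t}=\int_0^1 \bu(x(s,t),t)\cdot\partial_s x(s,t)\,ds.
\]

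Next I would differentiate in $t$ and commute $\partial_t$ with $\partial_s$. This produces two contributions: one in which $\partial_t$ falls on $\bu$, giving the material derivative $\bu_t+(\bv\cdot\nabla)\bu$ along the $\bv$-flow, and one in which it falls on the tangent vector $\partial_s x$, giving $\bu\cdot\partial_s[\bv(x(s,t),t)]$. Using the momentum equation in \eqref{EE_alpha} to substitute $\bu_t+(\bv\cdot\nabla)\bu=-(\nabla\bv)^{\top}\bu-\nabla\pi$, I obtain
\[
\frac{d}{dt}\Gamma_{C_t}=\int_0^1\Big[-(\nabla\bv)^{\top}\bu-\nabla\pi\Big]\cdot\partial_s x\,ds+\int_0^1 \bu\cdot\partial_s[\bv(x,t)]\,ds.
\]

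The pressure term is exact: $\nabla\pi\cdot\partial_s x=\partial_s[\pi(x(s,t),t)]$, which integrates to zero over the closed loop $s\in[0,1]$ since $x(0,t)=x(1,t)$. The crux is then the cancellation of the two remaining quadratic terms. By the chain rule $\partial_s[\bv(x,t)]=(\nabla\bv)\,\partial_s x$, so $\bu\cdot\partial_s[\bv(x,t)]=\bu^{\top}(\nabla\bv)\,\partial_s x$, while $(\nabla\bv)^{\top}\bu\cdot\partial_s x=\bu^{\top}(\nabla\bv)\,\partial_s x$ as well; the two integrands coincide and cancel, leaving $\frac{d}{dt}\Gamma_{C_t}=0$. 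This is precisely the role of the transpose term $(\nabla\bv)^{\top}\bu$ in the $\alpha$-Euler momentum balance, and it is the one place where the $\alpha$-model departs in form from the classical Euler case (there the analogous cancellation comes instead from $\bu\cdot\partial_s\bu=\tfrac12\partial_s|\bu|^2$, using that the same field $\bu$ both transports the curve and is circulated).

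The main obstacle is therefore largely bookkeeping: keeping the Jacobian and transpose index conventions straight so that the two quadratic terms match exactly, together with the mild regularity needed to justify differentiating under the integral and interchanging $\partial_t$ with $\partial_s$. That regularity is where smoothness of the filtered velocity $\bv$ and of the associated flow map $\chi_t$ enters, and it is the only analytic point that would need to be stated carefully; the algebraic heart of the argument is the exact cancellation above.
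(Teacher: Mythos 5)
Your proposal is correct and is essentially the argument the paper has in mind: the paper omits the proof, citing the classical Kelvin argument in Chorin--Marsden, and your pull-back to the reference curve, substitution of the momentum equation, and exact cancellation of $-(\nabla\bv)^{\top}\bu\cdot\partial_s x$ against $\bu\cdot(\nabla\bv)\partial_s x$ is precisely that argument adapted to the $\alpha$-model. Your observation that the transpose term is what makes the cancellation work also matches the paper's own remark that this term was added to the $\alpha$-Euler equations specifically so that the circulation theorem holds.
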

\begin{remark}
\sloppy
One of the original motivations in adding the additional term $(\nabla \bv )^{T} \bu$ to the $\alpha$-Euler equations \eqref{EE_alpha} was to prove the Kelvin circulation theorem above (see \cite{FHT01}, Section 2, page 507).
\end{remark}
\begin{corollary}\label{Kelvin_boundary}
Let $D \subset \mathbb R^{2}$ be a multiply connected bounded domain, bounded by finitely many number of smooth curves $(\partial D)_{i}$ where $i=0,\ldots, n$. Then the circulation along each connected boundary curve remains constant in time, i.e, for every $0 \leq i \leq n$, we have \[\frac{d}{dt}\int_{(\partial D)_{i}} (1-\alpha^{2}\Delta)\bv \cdot d \mathbf s = \frac{d}{dt}\int_{(\partial D)_{i}} \bu \cdot d \mathbf s = 0.\]
\end{corollary}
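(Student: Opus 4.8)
The plan is to deduce this directly from the Kelvin circulation theorem, Lemma~\ref{Kelvin_Circ_Thm_Alpha}, by taking the initial contour $C$ to be a fixed boundary component $(\partial D)_i$ and then showing that the material contour $C_t=\chi_t\big((\partial D)_i\big)$ coincides with $(\partial D)_i$ for all $t$. The \emph{crucial observation} is the first boundary condition $\bv\cdot\bn=0$ in \eqref{eeabc}: since the filtered velocity $\bv$ is everywhere tangent to $\partial D$, every integral curve of $\bv$ starting on the boundary stays on the boundary. Consequently the flow map $\chi_t$, which by definition transports points along $\bv$, maps $\partial D$ into itself, so that $(\partial D)_i$ is a material contour and Lemma~\ref{Kelvin_Circ_Thm_Alpha} is applicable to it. Note that only the first condition in \eqref{eeabc} enters here.

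First I would make precise that $\chi_t$ preserves each connected component $(\partial D)_i$ separately, rather than merely preserving $\partial D$ as a whole. Because $\chi_t$ is, for each fixed $t$, a diffeomorphism of $\overline{D}$ isotopic to the identity that carries the compact set $\partial D$ onto itself, it permutes the finitely many boundary components; continuity in $t$ together with $\chi_0=\mathrm{id}$ forces this permutation to be trivial, so $\chi_t\big((\partial D)_i\big)=(\partial D)_i$ as oriented curves for every $i$ and every $t$. This is the step I expect to require the most care: it rests on sufficient regularity of $\bv$, so that the flow $\chi_t$ is a genuine diffeomorphism of $\overline{D}$, and on the tangency $\bv\cdot\bn=0$ holding on all of $\partial D$, which is exactly what upgrades invariance of $D$ to invariance of the boundary itself. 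Orientation preservation, which is built into the isotopy to the identity, is what guarantees that the line integral is computed with the correct sign at every time.

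With $C_t=(\partial D)_i$ established, Lemma~\ref{Kelvin_Circ_Thm_Alpha} applied to $C=(\partial D)_i$ yields
\[
\frac{d}{dt}\int_{(\partial D)_i}(1-\alpha^2\Delta)\bv\cdot d\mathbf s=\frac{d}{dt}\,\Gamma_{C_t}=0.
\]
Finally, the second equality in the statement is immediate from the constitutive relation $(1-\alpha^2\Delta)\bv=\bu$ in \eqref{EE_alpha}: substituting it into the circulation integral replaces $(1-\alpha^2\Delta)\bv$ by $\bu$ pointwise along $(\partial D)_i$, so the two line integrals agree for every $t$ and in particular share the same vanishing time derivative. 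This completes the argument, the only genuine content beyond Lemma~\ref{Kelvin_Circ_Thm_Alpha} being the componentwise invariance of the boundary under $\chi_t$.
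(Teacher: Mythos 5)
Your proof is correct and is precisely the deduction the paper intends (the paper states the corollary without proof, as an immediate consequence of Lemma~\ref{Kelvin_Circ_Thm_Alpha}): the tangency condition $\bv\cdot\bn=0$ makes each $(\partial D)_i$ a material contour, so the Kelvin circulation theorem applies directly, and the second equality is just the relation $\bu=(1-\alpha^{2}\Delta)\bv$. Your extra care about the flow map fixing each boundary component individually (rather than permuting them) is a reasonable point of rigor but does not change the argument.
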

We shall have occasion to use the following integration by parts formula for vector valued functions
\begin{equation}\label{int_by_parts}
\int_{D} \bv \cdot \Delta \bv d \bx = \sum_{i=0}^{n}\int_{(\partial D)_{i}} \bv \cdot (\bn \cdot \nabla) \bv ds - \int_{D} | \nabla \bv |^{2} d \bx.
\end{equation}
Here $\bv=(v_{1},v_{2})$, $| \nabla \bv |^{2}= tr(\nabla \bv \cdot (\nabla \bv)^{T}) $, where $(\cdot)^{T}$ denotes the transpose, and $tr$ denotes trace, i.e $| \nabla \bv |^{2} = \sum_{i=1}^{2} [(\partial_{x} v_{i})^{2}+(\partial_{y} v_{i})^{2}] $. Equation \eqref{int_by_parts} follows from Green's identity for scalar valued functions $f$ and $g$, see, e.g., \cite[Theorem 3 (ii), page 712]{E10},
\begin{equation}\label{greensidentity}
\int_{D} f  \Delta g d \bx = \sum_{i=0}^{n}\int_{(\partial D)_{i}} f  (\bn \cdot \nabla) g ds - \int_{D} \nabla f \cdot \nabla g d \bx,
\end{equation} 
applied to each component of $\bv$ and summation of the resulting identities. 
By assumption, for any $\bv $ that satisfies \eqref{eeabc}, $\bn \cdot \nabla \bv$ is parallel to $\bn$ and $\bv \cdot \bn =0$, on the boundary, we have,
 $\bv \cdot (\bn \cdot \nabla \bv)=0$, i.e., we have, 
\begin{equation}\label{int_by_parts_alph_Eul}
\int_{D} \bv \cdot \Delta \bv d \mathbf x = - \int_{D} | \nabla \bv |^{2} d \bx.
\end{equation}
If $\bv_{1}$ and $\bv_{2}$ are two vector fields that satisfy \eqref{eeabc}, then by integrating by parts twice using formula \eqref{int_by_parts}, observing that the boundary terms vanish via \eqref{eeabc}, and using the fact that $tr(\nabla \bv_{1} \cdot (\nabla \bv_{2})^{T})=tr(\nabla \bv_{2} \cdot (\nabla \bv_{1})^{T})$ one also sees that
\begin{equation}\label{v1v2}
\int_{D} \bv_{1} \cdot \Delta \bv_{2} d \mathbf x  = \int_{D} \Delta \bv_{1} \cdot  \bv_{2} d \mathbf x.
\end{equation}

Consider a steady state solution $\omega^0=\curl(1- \alpha^2 \Delta)\bv^0=-\Delta\psi^0$ of the $\alpha$-Euler equation. Here \[\bv^{0}=-\nabla^{\perp}\phi^{0}, \quad \bu^{0}=(1-\alpha^{2}\Delta)\bv^{0}, \quad \psi^{0}=(1-\alpha^{2}\Delta)\phi^{0} \mbox{ and } \omega^{0}=-(1-\alpha^{2}\Delta)\Delta \phi^{0}.\] In particular,
\begin{equation}\lb{stst}\nt\phi^0\cdot\nabla\omega^0=\big(-\phi^0_y\partial_x+\phi^0_x\partial_y\big)\omega^0=0,
\end{equation}
and thus $\nabla\phi^0$ and $\nabla(\Delta\psi^0)$ are parallel. 
The corresponding linearized equations for the $\alpha$-Euler model about the steady state $\bv^{0}$, $\bu^{0}$ are as follows: 
\begin{align}
\bu_t&+\bv^0\cdot\nabla\bu+\bv\cdot\nabla\bu^0+\big(\nabla \bv^0 \big)^{\top} \bu+\big(\nabla \bv \big)^{\top} \bu^0 +\nabla \pi=0, \lb{LEE_alpha}  \\&
 \div\bu=\div\bv=0, \nonumber\\
\omega_t&+\bv^0\cdot\nabla\omega+\bv \cdot\nabla\omega^0=0,\lb{LEEV_alpha}\\
\Delta\psi_t&-\phi^0_x\Delta\psi_y+\phi^0_y\Delta\psi_x-\phi_x\Delta\psi^0_y+\phi_y\Delta\psi^0_x=0,\lb{LEES_alpha}
\end{align}
where $\bv=\curl^{-1}(1-\alpha^{2}\Delta)^{-1}\omega$ solves the system of equations $\curl (1-\alpha^{2}\Delta)\bv=\omega$, $\div \bv=0$ with appropriate boundary conditions.

\section{Rayleigh and Fjortoft criteria for the $\alpha$-Euler equations}
In this section, we derive the classical Rayleigh and Fjortoft criteria for the $\alpha$-Euler equations, see \cite[Sec. 22]{DZ04} for an overview of these results for the Euler equations.  
Our basic setup is the two dimensional channel, infinitely long in the $x$ direction and bounded in the $y$ direction, with walls at $y=A_{1}$ and $y=A_{2}$, where $-\infty < A_{1} < A_{2}<+\infty$, i.e., $ D = \mathbb R \times [A_{1},A_{2}]$. 

We work with $2D$ plane parallel shear flows in the channel. Note that for the two dimensional Euler equations on the domain $D$, any steady state velocity of the form $\bu^0(x,y)= (U(y),0)$ and constant pressure $p(x,y)=p^0$ will solve the Euler equations \eqref{EE}, where $U:[A_{1},A_{2}] \to \mathbb R$ is any real valued smooth function and $p^0$ is a real constant. Note that the boundary condition $\bu \cdot \bn=0 $ is automatically satisfied by a steady state of the type $(U(y),0)$.

We now obtain the analogous steady state for the $\alpha$-Euler equations \eqref{EE_alpha}. Let $V:[A_{1},A_{2}] \to \mathbb R$ be a  real valued smooth function such that $V'(A_{1})=V'(A_{2})=0$. Define $U(y) = V(y) - \alpha^2 V''(y)$. Then $(U(y),0)$ as computed and $p^0$ are steady state solutions of the Euler equations \eqref{EE}. 
It can be readily verified (see \cite{LNTZ15}, Prop. 2, pp. 60) that $\bu^0(x,y)=(U(y),0)$, $\bv^0(x,y)=(V(y),0)$ and 
\begin{equation}\label{pressuresteadyst}
\pi(x,y)=p_0 - \frac{1}{2}V(y)^2+\frac{\alpha^2} {2}(V'(y))^2
\end{equation}
 (note that $\pi$ is a function of $y$ alone) is a steady state solution to the $\alpha$-Euler equations \eqref{EE_alpha}, where $\bu^{0}=(1-\alpha^{2}\Delta)\bv^{0}$. Since $\bn=(0,\pm1)$ on the boundaries $y=A_{1}$ and $y=A_{2}$, $(\bn \cdot \nabla) \bv^{0}= \partial_{y}(V(y),0)=(V'(y),0)$. Since $(\bn \cdot \nabla) \bv^{0} \cdot \bt =(V'(y),0)\cdot (1,0)= 0$ on the boundary $y=A_{1}$ and $y=A_{2}$, this reduces to $V'(A_{1})=V'(A_{2})=0$. Another way to obtain a steady state is to start with an arbitrary profile $U(y)$ and compute $V(y)$ by solving the ODE:
\begin{equation}\label{vode}
V(y)-\alpha^{2}V''(y)=U(y), \quad V'(A_{1})=V'(A_{2})=0. 
\end{equation}
\begin{lemma}\label{alphaeulersteadyst}
\sloppy
Let $V(y)$ be any smooth function satisfying $V'(A_{1})=V'(A_{2})=0$, and define $U(y)=V(y)-\alpha^{2}V''(y)$, $\pi(x,y)=p_0 - \frac{1}{2}V(y)^2+\frac{\alpha^2} {2}(V'(y))^2$. Then $\bu^0(x,y)=(U(y),0)$, $\bv^0(x,y)=(V(y),0)$ and $\pi(x,y)=p_0 - \frac{1}{2}V(y)^2+\frac{\alpha^2} {2}(V'(y))^2$ is a steady state solution to the $\alpha$-Euler equations \eqref{EE_alpha} on the domain $\mathbb R \times [A_{1},A_{2}]$. 
\end{lemma}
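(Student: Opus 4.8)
The plan is to verify directly that the proposed triple $\bu^0=(U(y),0)$, $\bv^0=(V(y),0)$, $\pi$ satisfies every component of the $\alpha$-Euler system \eqref{EE_alpha} together with the boundary conditions \eqref{eeabc}; since the flow is stationary there is no time derivative to contend with, so the verification is purely algebraic. First I would dispose of the easy constraints. Because $\bv^0$ and $\bu^0$ depend on $y$ alone, $\div\bv^0=\partial_x V=0$ and $\div\bu^0=\partial_x U=0$. Moreover $\Delta V(y)=V''(y)$, so the filter relation reduces to $(1-\alpha^2\Delta)\bv^0=(V-\alpha^2 V'',0)=(U,0)=\bu^0$, which holds by the very definition of $U$.

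The substantive step is the steady momentum balance $(\bv^0\cdot\nabla)\bu^0+(\nabla\bv^0)^{\top}\bu^0+\nabla\pi=0$. I would evaluate the three terms in turn. The convective term vanishes identically, since $\bv^0\cdot\nabla=V(y)\partial_x$ annihilates $\bu^0=(U(y),0)$, which carries no $x$-dependence. For the stretching term, the Jacobian $\nabla\bv^0$ has a single nonzero entry, namely $\partial_y V=V'(y)$ in its $(1,2)$ slot; transposing and applying the result to $\bu^0$ therefore yields $(\nabla\bv^0)^{\top}\bu^0=(0,V'(y)U(y))$. Finally, $\pi$ is a function of $y$ only, so $\nabla\pi=(0,\partial_y\pi)$ with $\partial_y\pi=-VV'+\alpha^2 V'V''=-V'(V-\alpha^2 V'')=-V'U$. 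Summing the three contributions gives $(0,0)+(0,V'U)+(0,-V'U)=(0,0)$, as required.

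Next I would confirm the boundary conditions \eqref{eeabc}. On the walls $y=A_1,A_2$ one has $\bn=(0,\pm1)$, so $\bv^0\cdot\bn=0$ holds automatically, while $(\bn\cdot\nabla)\bv^0=\pm(V'(y),0)$ is parallel to $\bn$ precisely when its horizontal component $V'(y)$ vanishes on the boundary, i.e.\ when $V'(A_1)=V'(A_2)=0$. This is the sole place where the standing hypothesis on $V$ is used, and it enters exactly here.

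There is no genuine obstacle in this argument; the content lies entirely in the bookkeeping. The one point deserving attention is the exact cancellation in the momentum equation between the stretching term $(\nabla\bv^0)^{\top}\bu^0$ and the pressure gradient. This cancellation is not accidental: the specific form of $\pi$, and in particular the correction $\tfrac{\alpha^2}{2}(V'(y))^2$ absent from the classical Euler pressure, is precisely engineered so that $\partial_y\pi=-V'U$ offsets the transpose term produced by the extra $(\nabla\bv)^{\top}\bu$ contribution peculiar to the $\alpha$-model.
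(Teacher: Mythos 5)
Your proposal is correct and follows the same route the paper takes: a direct, term-by-term verification of the divergence constraints, the filter relation, the momentum balance (where the stretching term $(\nabla\bv^0)^{\top}\bu^0=(0,V'U)$ cancels against $\nabla\pi=(0,-V'U)$), and the boundary conditions, with the hypothesis $V'(A_1)=V'(A_2)=0$ entering only through $(\bn\cdot\nabla)\bv^0\parallel\bn$. The only difference is that the paper outsources the momentum computation to a citation of \cite{LNTZ15} and writes out only the boundary-condition check, whereas you carry out the full calculation explicitly.
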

\begin{remark}
We stress that an arbitrary profile $V(y)$ cannot be a steady state for the $\alpha$-Euler equations. Only profiles that satisfy the boundary condition $V'(A_{1})=V'(A_{2})=0$ can be steady states in contrast to the Euler case where an arbitrary profile $U(y)$ with no extra boundary conditions is a steady state for Euler.
\end{remark}
We let $\psi^0$ and $\phi^0$ be the steady state stream functions, real valued, associated with the respective steady state velocities $U$ and $V$ respectively, i.e we have, $\partial_{y} \phi^{0}(y)=V(y)$ and $\partial_{y} \psi^{0}(y)=U(y)$ and $\psi^{0}=(1-\alpha^{2}\partial_{yy})\phi^{0}$. Note that $\psi^0$ and $\phi^0$ are functions of $y$ alone. The boundary condition $V'(A_{1})=V'(A_{2})=0$ implies that $\phi^{0}_{yy}(A_{1})=\phi^{0}_{yy}(A_{2})=0$. We work with the $\alpha$-Euler equations in stream function formulation \eqref{LEES_alpha}. Linearizing \eqref{LEES_alpha} about the steady state $\psi^0,\phi^0$ we obtain the linearized equation for the perturbations $\widetilde{\phi}=\widetilde{\phi}(x,y,t)$ and $\widetilde{\psi}=\widetilde{\psi}(x,y,t)$ of the stream function \eqref{LEES_alpha} of the form
\begin{equation}\label{esfa}
 \Delta\widetilde{\psi}_t -\phi^0_x\Delta\widetilde{\psi}_y+\phi^0_y\Delta\widetilde{\psi}_x-\widetilde{\phi}_x\Delta\psi^0_y+\widetilde{\phi}_y\Delta\psi^0_x=0.
\end{equation}
Since we have the relation $(1-\alpha^{2}\Delta)\widetilde{\phi}=\widetilde{\psi}$ one can consider the equation above as an equation for $\widetilde{\phi}$ alone. We note that this equation is supplemented by the boundary conditions: no normal flow across the boundaries, so $\bv\cdot \bn = 0$ on the boundary $\partial D$, which are the two walls at $y=A_{1}$ and $y=A_{2}$, and $\bn$ is the unit normal vector on $\partial D$ and $(\bn \cdot \nabla )\bv$ is parallel to $\bn$. Since, $\bv= -\nabla^{\perp}\widetilde{\phi}$, we see that the boundary conditions for $\widetilde{\phi}$ are $\nabla \widetilde{\phi} \cdot \bt = 0$ on $\partial D$ where $\bt$ is the unit tangent vector on $\partial D$ and $(\bn \cdot \nabla )(-\nabla^{\perp}\widetilde{\phi}) \cdot \bt = 0$ on $\partial D$. 

Since $\phi^0_x=\psi^0_x=0$, equation \eqref{esfa} reduces to 
\begin{equation} \label{lin_strm_fcn}
\Delta\widetilde{\psi}_t +\phi^0_y\Delta\widetilde{\psi}_x-\widetilde{\phi}_x\Delta\psi^0_y=0,
\end{equation}
on $D$ with $\nabla \widetilde{\phi} \cdot \bt = 0$ and $(\bn \cdot \nabla )(-\nabla^{\perp}\widetilde{\phi}) \cdot \bt = 0$ on $\partial D$.
Similar to the analysis for the Euler equations, see \cite[Sec. 20-22, pp. 124-133]{DZ04}, we look for solutions to \eqref{lin_strm_fcn} of the form 
\begin{equation}\label{stranzatz}
\widetilde{\psi}(x,y,t)=\psi(y)e^{ik(x-ct)} \mbox{ and } \widetilde{\phi}(x,y,t)=\phi(y)e^{ik(x-ct)},
\end{equation}
where $\psi:[A_{1},A_{2}] \to \mathbb C$ and $\phi:[A_{1},A_{2}] \to \mathbb C$ are complex valued functions of $y$, $k \in \mathbb R$ is the wave number, which is real in this case and $c$ is the wave speed which is complex valued, $c=c_r+ic_i$. If $c_{i}>0$, this corresponds to an exponentially growing (in time) solution to \eqref{lin_strm_fcn}.

We will now derive an $\alpha$-Euler version of the Rayleigh stability equation. 
\begin{lemma}\label{rst}
Let $\bu^0(x,y)=(U(y),0)$, $\bv^0(x,y)=(V(y),0)$ and $\pi(x,y)=p_0 - \frac{1}{2}V(y)^2+\frac{\alpha^2} {2}(V'(y))^2$ be a steady state solution to the $\alpha$-Euler equations \eqref{EE_alpha} on the domain $\mathbb R \times [A_{1},A_{2}]$. Suppose the linearized stream function equation \eqref{lin_strm_fcn} about this steady state has a solution of the form \eqref{stranzatz} with $c_{i}>0$, and some $k \in \mathbb R$. Then $\phi:[A_{1},A_{2}] \to \mathbb C$ satisfies the following boundary value problem:
\begin{equation} \label{RSE_alpha}
\begin{split}
-\alpha^{2}\phi'''' +(1+2\alpha^{2} k^{2})\phi''- k^{2}(1+\alpha^{2}k^{2})\phi-\frac{U'' \phi}{V-c}=0\\
\phi(A_{1})=\phi(A_2)=0 \\
\phi''(A_{1})=\phi''(A_2)=0.
\end{split}
\end{equation}
\end{lemma}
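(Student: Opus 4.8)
The plan is to substitute the normal-mode ansatz \eqref{stranzatz} into the linearized stream-function equation \eqref{lin_strm_fcn}, collapse it to an ordinary differential equation in the transverse variable $y$, then eliminate $\psi$ in favour of $\phi$ by means of the filtering relation $\widetilde{\psi}=(1-\alpha^{2}\Delta)\widetilde{\phi}$, and finally read off the four boundary conditions from the physical conditions on $\bv$. Throughout I will use the steady-state identities $\phi^{0}_{y}=V$ and $\psi^{0}_{y}=U$, and, since $\psi^{0}$ depends on $y$ alone, $\Delta\psi^{0}=\psi^{0}_{yy}=U'$ so that $\Delta\psi^{0}_{y}=U''$. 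Note also that a genuinely growing mode forces $k\neq0$, since otherwise the time factor $e^{-ikct}$ does not grow; this will be used repeatedly to cancel factors of $ik$.

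First I would compute the three quantities entering \eqref{lin_strm_fcn}. Writing $E=e^{ik(x-ct)}$, the ansatz gives $\Delta\widetilde{\psi}=(\psi''-k^{2}\psi)E$, hence $\Delta\widetilde{\psi}_{t}=-ikc(\psi''-k^{2}\psi)E$ and $\Delta\widetilde{\psi}_{x}=ik(\psi''-k^{2}\psi)E$, while $\widetilde{\phi}_{x}=ik\phi E$. Substituting these together with $\phi^{0}_{y}=V$ and $\Delta\psi^{0}_{y}=U''$ into \eqref{lin_strm_fcn} and cancelling the common factor $ikE$ collapses the PDE to the single mixed relation
\[ (V-c)\big(\psi''-k^{2}\psi\big)=U''\phi, \]
which is the $\alpha$-analogue of the intermediate form of the Rayleigh equation, still written in both unknowns $\psi$ and $\phi$.

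Next I would express $\psi$ through $\phi$. Applying the ansatz to $\widetilde{\psi}=(1-\alpha^{2}\Delta)\widetilde{\phi}$ and using $\Delta\widetilde{\phi}=(\phi''-k^{2}\phi)E$ yields $\psi=(1+\alpha^{2}k^{2})\phi-\alpha^{2}\phi''$. Differentiating twice and forming $\psi''-k^{2}\psi$ gives, after collecting terms,
\[ \psi''-k^{2}\psi=-\alpha^{2}\phi''''+(1+2\alpha^{2}k^{2})\phi''-k^{2}(1+\alpha^{2}k^{2})\phi. \]
Inserting this into the mixed relation and dividing by $V-c$ produces exactly the differential equation in \eqref{RSE_alpha}. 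The division is the one analytic point that must be justified: because $c_{i}>0$, the imaginary part of $V(y)-c$ equals $-c_{i}\neq0$ for every $y\in[A_{1},A_{2}]$, so $V-c$ never vanishes on the channel and the quotient $U''\phi/(V-c)$ is well defined and smooth.

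Finally I would extract the boundary data. On each wall the tangent is $\bt=(\pm1,0)$, the normal is $\bn=(0,\pm1)$, and $\bv=-\nt\widetilde{\phi}=(\widetilde{\phi}_{y},-\widetilde{\phi}_{x})$. The no-penetration condition $\nabla\widetilde{\phi}\cdot\bt=0$ reads $\widetilde{\phi}_{x}=ik\phi E=0$, which (again using $k\neq0$) forces $\phi(A_{1})=\phi(A_{2})=0$. For the second condition, on the walls $\bn\cdot\nabla=\pm\partial_{y}$, so $(\bn\cdot\nabla)\bv\cdot\bt=\pm\widetilde{\phi}_{yy}=\pm\phi''E$; the requirement that $(\bn\cdot\nabla)\bv$ be parallel to $\bn$, i.e. that this tangential component vanish, gives $\phi''(A_{1})=\phi''(A_{2})=0$. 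I expect the computation to be otherwise routine; the only genuine care is needed in this last step, namely in correctly converting the vector condition $(\bn\cdot\nabla)\bv\parallel\bn$ into the scalar condition $\phi''=0$, and in keeping track of the hypothesis $k\neq0$ wherever a factor $ik$ is cancelled.
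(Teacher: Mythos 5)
Your proof is correct and follows essentially the same route as the paper: substitute the normal-mode ansatz into \eqref{lin_strm_fcn}, reduce to a mixed relation in $\psi$ and $\phi$, eliminate $\psi$ via $\psi=(1+\alpha^{2}k^{2})\phi-\alpha^{2}\phi''$, and read the boundary conditions off \eqref{eeabc}. Note that your intermediate relation $(V-c)(\psi''-k^{2}\psi)=U''\phi$ carries the opposite sign on $U''\phi$ from the paper's \eqref{computations_2}, but yours is the one actually consistent with the stated equation \eqref{RSE_alpha} (the paper's \eqref{computations_2} appears to contain a sign typo), and your explicit tracking of the hypothesis $k\neq0$ when cancelling factors of $ik$ is a point the paper leaves implicit.
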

\begin{proof}
Since $\widetilde{\psi}$ and $\widetilde{\phi}$ are related via $(1-\alpha^2 \Delta) \widetilde{\phi}=\widetilde{\psi}$, we have the following relation between $\psi$ and $\phi$,
\begin{equation} \label{amp_reln}
\psi(y)=(1+\alpha^2k^2) \phi(y)-\alpha^2 \phi''(y).
\end{equation}
Using \eqref{amp_reln}, $\widetilde{\phi}(x,y,t)=\phi(y)e^{ik(x-ct)}$, $\widetilde{\psi}(x,y,t)=\psi(y)e^{ik(x-ct)}$, $\partial_{y} \phi^{0}(y)=V(y)$ and $\partial_{y} \psi^{0}(y)=U(y)$,  
we see that \eqref{lin_strm_fcn} yields

\begin{equation}\label{computations_2}
 (\psi''-k^{2}\psi)(V-c)+U''\phi=0. 
\end{equation}
The boundary conditions for $\widetilde{\phi}$ are $\nabla \widetilde{\phi} \cdot \bt = 0$ on $\partial D$, where the boundary corresponds to $y=A_{1}$ and $y=A_{2}$. Since $\bt=(1,0)$ on $\partial D$, this means that $\partial_{x}\widetilde{\phi}=0$ at $y=A_{1}$ and $y=A_2$. Since $\partial_{x}\widetilde{\phi}(x,y,t)=ik \phi(y)e^{ik(x-ct)}=0$, when $y=A_{1}$ and $y=A_2$, we have the first boundary condition in \eqref{RSE_alpha}.
The boundary condition corresponding to $\bn \cdot \nabla (-\nabla^{\perp}\widetilde{\phi}) \cdot \bt =0$ is computed as follows: we note that $\bn \cdot \nabla = (0,1) \cdot (\partial_{x}, \partial_{y})= \partial_{y}$. Thus $\bn \cdot \nabla (-\nabla^{\perp}\widetilde{\phi}) = \partial_{y} (\partial_{y} \widetilde{\phi}, -\partial_{x}\widetilde{\phi})= (\widetilde{\phi}_{yy},\widetilde{\phi}_{xy})$. Thus, $\bn \cdot \nabla (-\nabla^{\perp}\widetilde{\phi}) \cdot \bt$ becomes $(\widetilde{\phi}_{yy},\widetilde{\phi}_{xy}) \cdot (1,0)=(\widetilde{\phi}_{yy},0)$. Thus, we get that, $\bn \cdot \nabla (-\nabla^{\perp}\widetilde{\phi}) \cdot \bt =0$ becomes $\widetilde{\phi}_{yy}(x,y,t)=0$ when $y=A_{1}$ and $y=A_{2}$. Since, $\widetilde{\phi}_{yy}(x,y,t)=\phi''(y)e^{ik(x-ct)}$, this corresponds to the second boundary condition in \eqref{RSE_alpha}.
Using \eqref{amp_reln} we note that \eqref{computations_2} yields \eqref{RSE_alpha}.
\end{proof}
The analogous equation for the Euler equation is the classical Rayleigh stability equation, known as the inviscid Orr-Sommerfeld equation \cite[pp. 122, Eq. 4.6]{MP94}. 

\begin{remark}\label{functionspace}
Note that $\widetilde{\phi}(x,y,t)=\phi(y)e^{ik(x-ct)}=\phi(y)e^{ikx}e^{-ikct}=\widehat{\phi}(x,y)e^{-ikct}$, where $\widehat{\phi}(x,y)=\phi(y)e^{ikx}$. The smoothness of $\widetilde{\phi}$ depends on the smoothness of $\phi$. If we consider $\phi$ to be in the space \[Y:=\{\phi \in H^{4}([A_{1},A_2]; \mathbb C), \phi(A_{1})=\phi(A_{2})=0, \phi''(A_{1})=\phi''(A_{2})=0\},\] to solve \eqref{RSE_alpha}, then we obtain a  solution to \eqref{lin_strm_fcn} and we measure instability for \eqref{lin_strm_fcn} in the space \[\widehat{Y}:= \{\widehat{\phi}(\cdot,\cdot):\mbox{sup}_{x \in \mathbb R}||\widehat{\phi}(x,\cdot) ||_{H^{4}([A_{1},A_{2}]; \mathbb C)} < \infty \}.\] 
We shall consider \eqref{lin_strm_fcn} as an (linear) evolution equation for $\widetilde{\phi}=\widetilde{\phi}(\cdot,\cdot,t)$ in the space $\widehat{Y}$, and obtain conditions for existence of a growing eigenmode solution, i.e for spectral instability to the linearized equation \eqref{lin_strm_fcn} for the perturbation stream function. All references to instability in this subsection is to be regarded in the sense described above. 
A solution of the form $\widetilde{\phi}(x,y,t)=\phi(y)e^{ik(x-ct)}$, with $\phi \in Y$, with $c_{i}> 0$ to equation \eqref{lin_strm_fcn} grows exponentially in time. This corresponds to spectral instability for \eqref{lin_strm_fcn} in space $\widehat{Y}$ with spectral parameter $\lambda=-ikc$. 
\end{remark}
We are ready to prove an analogue of the classical Rayleigh's theorem for the $\alpha$-Euler equations.
\begin{proposition}\label{prop_Rayl_crit_alpha}
\sloppy
Let $\bu^0(x,y)=(U(y),0)$, $\bv^0(x,y)=(V(y),0)$ and $\pi(x,y)=p_0 - \frac{1}{2}V(y)^2+\frac{\alpha^2} {2}(V'(y))^2$ be a steady state solution to the $\alpha$-Euler equations \eqref{EE_alpha} on the domain $\mathbb R \times [A_{1},A_{2}]$. If the linearized stream function equation \eqref{lin_strm_fcn} about this steady state has solutions of the form \eqref{stranzatz} with $c_{i}>0$, then $U''(y_{s}) = 0$ for at least one point $y_{s} \in [A_{1},A_{2}]$.
\end{proposition}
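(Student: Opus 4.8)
The plan is to mimic the classical Rayleigh argument: multiply the differential equation in \eqref{RSE_alpha} by the complex conjugate $\bar\phi$, integrate over the cross-section $[A_1,A_2]$, and extract information from the imaginary part of the resulting identity. Since $c_i>0$ and $V$ is real valued, the factor $V-c$ satisfies $|V-c|^2=(V-c_r)^2+c_i^2\ge c_i^2>0$ on $[A_1,A_2]$, so $\frac{U''}{V-c}$ is a bounded continuous function and all the integrals below are well defined.

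First I would integrate by parts the three derivative terms, the key point being that \emph{both} boundary conditions in \eqref{RSE_alpha}, namely $\phi(A_1)=\phi(A_2)=0$ and $\phi''(A_1)=\phi''(A_2)=0$, are exactly what is needed to annihilate every boundary contribution. Applying integration by parts twice to the fourth-order term gives $\int_{A_1}^{A_2}\bar\phi\,\phi''''\,dy=\int_{A_1}^{A_2}|\phi''|^2\,dy$ (the boundary terms $[\bar\phi\,\phi''']$ and $[\bar\phi'\,\phi'']$ both drop out because $\phi$ and $\phi''$ vanish at the endpoints), and similarly $\int_{A_1}^{A_2}\bar\phi\,\phi''\,dy=-\int_{A_1}^{A_2}|\phi'|^2\,dy$. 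Hence the integrated equation becomes
\[
-\alpha^2\!\int_{A_1}^{A_2}\!|\phi''|^2\,dy-(1+2\alpha^2k^2)\!\int_{A_1}^{A_2}\!|\phi'|^2\,dy-k^2(1+\alpha^2k^2)\!\int_{A_1}^{A_2}\!|\phi|^2\,dy-\int_{A_1}^{A_2}\frac{U''}{V-c}\,|\phi|^2\,dy=0 ,
\]
in which the first three terms are manifestly real.

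Next I would take the imaginary part. Writing $\frac{1}{V-c}=\frac{V-\bar c}{|V-c|^2}$ shows $\Im\frac{U''}{V-c}=\frac{c_i\,U''}{|V-c|^2}$, so the imaginary part of the identity collapses to
\[
c_i\int_{A_1}^{A_2}\frac{U''(y)}{|V(y)-c|^2}\,|\phi(y)|^2\,dy=0 .
\]
Because $c_i>0$ by hypothesis, the integral itself must vanish. The weight $|\phi|^2/|V-c|^2$ is continuous and nonnegative, and since $\phi\in Y$ is a nontrivial solution (it generates a genuine growing eigenmode), it is strictly positive on a set of positive measure. If $U''$ had one fixed sign throughout $[A_1,A_2]$ the integral would be strictly nonzero, a contradiction; by continuity of $U''$ (recall $U=V-\alpha^2V''$ with $V$ smooth) it must therefore vanish at some $y_s\in[A_1,A_2]$, which is precisely the assertion.

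I expect the only genuine subtlety to be the bookkeeping in the fourth-order integration by parts: unlike the classical second-order Rayleigh equation, where only $\phi(A_1)=\phi(A_2)=0$ is invoked, here the extra condition $\phi''(A_1)=\phi''(A_2)=0$ is indispensable, both to discard the boundary terms and to ensure the derivative contributions remain real so that they disappear when the imaginary part is taken. Everything after that is the standard Rayleigh imaginary-part trick, with the mild extra observation that one must record $\phi\not\equiv0$ to rule out the trivial case.
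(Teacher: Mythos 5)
Your proposal is correct and follows essentially the same route as the paper: multiply \eqref{RSE_alpha} by $\bar\phi$, integrate by parts using both boundary conditions $\phi=\phi''=0$ at the endpoints to kill all boundary terms, and take the imaginary part of the resulting identity to isolate $c_i\int U''|\phi|^2/|V-c|^2\,dy=0$. Your closing observation---that one must invoke $\phi\not\equiv 0$ and the continuity of $U''$ to pass from the vanishing integral to the existence of a zero of $U''$---is a small but welcome piece of extra care that the paper leaves implicit.
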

\begin{proof}
By assumption $c_i > 0$, we have $V-c \neq 0$ because $V$ is real valued and thus \eqref{RSE_alpha} is non-singular. We multiply the first equation of \eqref{RSE_alpha} by $\phi^*$, the complex conjugate of $\phi$, integrate by parts (using the boundary conditions, i.e the second and third equations of \eqref{RSE_alpha}) to obtain, 
\begin{multline*}
-\alpha^{2}\int_{A_1}^{A_2} |\phi''(y)|^{2}dy -  \int_{A_1}^{A_2}(1+2\alpha^{2} k^{2})|\phi'(y)|^{2}dy - \int_{A_1}^{A_2}k^{2}(1+\alpha^{2}k^{2})|\phi(y)|^{2} dy \\
-\int_{A_1}^{A_2} \frac{U''(y)}{V(y)-c}|\phi(y)|^{2} dy = 0. \numberthis \label{int_RSE}
\end{multline*}
Indeed, the boundary terms vanish via the boundary conditions. 
Taking the imaginary part of \eqref{int_RSE} gives 
\begin{equation} \label{R_criterion}
c_i \int_{A_{1}}^{A_{2}} \frac{U''(y)}{|V(y)-c|^2}|\phi(y)|^2 dy = 0
\end{equation}
Since $c_{i}>0$, this forces $U''(y_{s})=0$ for at least one point $y_{s} \in [A_{1},A_{2}]$.
\end{proof}
We thus have that if $U=V-\alpha^{2}V''$ does not have an inflection point in $[A_{1},A_{2}]$ then \eqref{lin_strm_fcn} cannot have an exponentially growing eigensolution of the form $\phi(y)e^{ik(x-ct)}$. 
\begin{example}\label{rex}
 Let $U(y)$ be a profile with no inflection point. Compute $V(y)$ by solving the ODE: $V(y)-\alpha^{2}V''(y)=U(y)$ subject to the boundary conditions $V'(A_{1})=V'(A_{2})=0$. The steady state thus computed, with the pressure given by equation \eqref{pressuresteadyst}, cannot be spectrally unstable by Proposition \ref{prop_Rayl_crit_alpha}. This class of steady states include profiles $U(y)$ that are symmetric about the center point with no inflection points, for example, $U(y)=1-y^{2}$ on the interval $[-1,1]$ and it also includes linear steady states such as the Couette flow $U(y)=y$ on  $[0,1]$. 
\end{example}
We now derive an analogue of the classical Fjortoft's theorem for the $\alpha$-Euler equations.
\begin{proposition}\label{prop_F_crit_alpha}
\sloppy
Let $\bu^0(x,y)=(U(y),0)$, $\bv^0(x,y)=(V(y),0)$ and $\pi(x,y)=p_0 - \frac{1}{2}V(y)^2+\frac{\alpha^2} {2}(V'(y))^2$ be a steady state solution to the $\alpha$-Euler equations \eqref{EE_alpha} on the domain $\mathbb R \times [A_{1},A_{2}]$. If the linearized stream function equation \eqref{lin_strm_fcn} about this steady state has solutions of the form \eqref{stranzatz} with $c_{i}>0$, then $U''(y)(V(y)-V(y_{s})) < 0$ for at least one point $y \in [A_{1},A_{2}]$. Here $y_{s} \in [A_{1},A_{2}]$ is a point such that $U''(y_{s})=0$ obtained in Proposition \ref{prop_Rayl_crit_alpha}.
\end{proposition}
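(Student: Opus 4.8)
The plan is to run the classical Fjortoft argument on the already-integrated Rayleigh equation \eqref{int_RSE}, exploiting as a free parameter the vanishing integral relation that emerged in the proof of Proposition \ref{prop_Rayl_crit_alpha}. First I would record that, since $c_i>0$, the imaginary part of \eqref{int_RSE} (i.e.\ relation \eqref{R_criterion}) gives
\[
\int_{A_1}^{A_2}\frac{U''(y)}{|V(y)-c|^2}\,|\phi(y)|^2\,dy=0.
\]
This is the $\alpha$-analogue of the Rayleigh relation, and it is the identity I will add back in, scaled by a suitable real constant, to shift the critical value.

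Next I would extract the real part of \eqref{int_RSE}. The first three terms are already real; writing $V-c=(V-c_r)-ic_i$ so that $\operatorname{Re}\frac{1}{V-c}=\frac{V-c_r}{|V-c|^2}$, the singular term contributes $U''(V-c_r)/|V-c|^2$. Hence
\[
\int_{A_1}^{A_2}\frac{U''(y)\,(V(y)-c_r)}{|V(y)-c|^2}\,|\phi(y)|^2\,dy
=-\alpha^{2}\!\int_{A_1}^{A_2}\!|\phi''|^{2}\,dy-\int_{A_1}^{A_2}\!(1+2\alpha^{2}k^{2})|\phi'|^{2}\,dy-\int_{A_1}^{A_2}\!k^{2}(1+\alpha^{2}k^{2})|\phi|^{2}\,dy .
\]
Since $\alpha>0$, $1+2\alpha^{2}k^{2}>0$ and $k^{2}(1+\alpha^{2}k^{2})\ge 0$, the right-hand side is strictly negative: a growing mode is nontrivial, $\phi\not\equiv 0$, and together with the Dirichlet conditions $\phi(A_1)=\phi(A_2)=0$ this forces $\int_{A_1}^{A_2}|\phi'|^2\,dy>0$, so already the second term on the right is strictly negative.

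Now comes the Fjortoft step. Let $y_s$ be the inflection point supplied by Proposition \ref{prop_Rayl_crit_alpha}, so that $U''(y_s)=0$ and $V(y_s)$ is a well-defined real number. I would add $(c_r-V(y_s))$ times the vanishing Rayleigh relation to the real-part identity above. Using $V-V(y_s)=(V-c_r)+(c_r-V(y_s))$, this replaces $V-c_r$ by $V-V(y_s)$ without changing the value, yielding
\[
\int_{A_1}^{A_2}\frac{U''(y)\,\big(V(y)-V(y_s)\big)}{|V(y)-c|^2}\,|\phi(y)|^2\,dy
=-\alpha^{2}\!\int_{A_1}^{A_2}\!|\phi''|^{2}\,dy-\int_{A_1}^{A_2}\!(1+2\alpha^{2}k^{2})|\phi'|^{2}\,dy-\int_{A_1}^{A_2}\!k^{2}(1+\alpha^{2}k^{2})|\phi|^{2}\,dy<0 .
\]
Because the weight $|\phi|^2/|V-c|^2$ is nonnegative and the integral is strictly negative, the integrand factor $U''(y)\big(V(y)-V(y_s)\big)$ must be strictly negative at some $y\in[A_1,A_2]$, which is exactly the assertion.

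I expect no serious obstacle here, as the argument is a direct transcription of the Euler proof; the only genuinely new feature is that the critical layer sits at $V=c$ (the filtered velocity) rather than at $U=c$, so the Fjortoft combination naturally produces the factor $V(y)-V(y_s)$ while the curvature factor remains $U''$. The single point requiring a line of care is the strict negativity of the right-hand side, which I would justify via $\phi\not\equiv 0$ together with the Dirichlet conditions forcing $\int_{A_1}^{A_2}|\phi'|^2\,dy>0$.
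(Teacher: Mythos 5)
Your proposal is correct and follows essentially the same route as the paper: take the real part of the integrated identity \eqref{int_RSE}, add $(c_r-V(y_s))$ times the vanishing Rayleigh integral \eqref{R_criterion} to replace $V-c_r$ by $V-V(y_s)$, and conclude from the strict negativity of the right-hand side. Your extra remark justifying strict negativity via $\phi\not\equiv 0$ and the Dirichlet conditions is a small point of care that the paper leaves implicit, but it does not change the argument.
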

\begin{proof}
Let $V_s=V(y_s)$. Consider the real part of \eqref{int_RSE} 
and adding
\begin{equation}\label{add}
(c_r-V_s) \int_{A_1}^{A_2} \frac{U''(y)}{|V(y)-c|^2}|\phi(y)|^2 dy = 0,
\end{equation}
we obtain,
\begin{multline}\label{F_crit}
\int_{A_1}^{A_2} \frac{U''(y)(V(y)-V_s)}{|V(y)-c|^2}|\phi(y)|^2 dy  = -\alpha^{2}\int_{A_1}^{A_2} |\phi''(y)|^{2}dy  \\
-\int_{A_1}^{A_2}(1+2\alpha^{2} k^{2})|\phi'(y)|^{2}dy -\int_{A_1}^{A_2}k^{2}(1+\alpha^{2}k^{2})|\phi(y)|^{2} dy < 0,
\end{multline}
since the integrands on the right hand side are non negative. %
Thus $U''(y)(V(y)-V(y_{s})) < 0$ for at least one point $y \in [A_{1},A_{2}]$. 
\end{proof}
\begin{remark}\label{vs}
Note that in the proof of the above theorem since the integral in \eqref{add} is zero, the coefficient in front of the integral in \eqref{add} can be replaced by any number. The statement of Fjortoft criterion can be generalized to the following fact: for every real number $z$, a necessary condition for instability is the existence of at least one point $y \in [A_{1},A_{2}]$ such that
\begin{equation}\label{fgeneral}
(V(y)-z)U''(y) < 0.
\end{equation}
In fact, it is readily seen that the Fjortoft criterion implies the Rayleigh criterion by choosing points $z_{1}$ and $z_{2}$ such that $V(y)-z_{1}>0$ and $V(y)-z_{2}<0$ for all $y \in [A_{1},A_{2}]$. Then Fjortoft criterion says that there exists at least two points $y_{1}$ and $y_{2}$, such that $U''(y_{1}) < 0$ and $U''(y_{2}) > 0$ thereby ensuring that $U''(y)$ changes sign somewhere in the flow. 
\end{remark}
\begin{remark}\label{fexample}
We recall that Fjortoft criterion for the Euler equation says that a necessary condition for instability is that $U''(y)(U(y)-U(y_{s}))  < 0$ for at least one point $y \in [A_{1},A_{2}]$. For the Euler equations, Fjortoft criterion is sharper than the Rayleigh criterion. The Fjortoft criterion was used to study steady states $U(y)$ with a monotone profile and one inflection point $y_{s}$.  If $U''$ and $U-U(y_{s})$ have the same sign everywhere in the flow, then $U(y)$ is a stable steady state even though it has an inflection point $y_{s}$. The analogous result for $\alpha$-Euler is as follows. Suppose we have a profile $U(y)$ which is monotone and has one inflection point $y_{s}$. We compute $V(y)$ using equation \eqref{vode}. If $U''(y)$ and $V(y)-V(y_{s})$ have the same sign everywhere, then the steady state is stable for $\alpha$-Euler.
\end{remark}
\begin{example}\label{funstable}
Let us consider $V(y)=y-y^{3}$ on the interval $[-1/\sqrt{3},1/\sqrt{3}]$. Note that $V$ is monotone with one inflection point at $y=0$ and $V'(-1/\sqrt{3})=V'(1/\sqrt{3})=0$. One can compute $U(y)=V(y)-\alpha^{2}V''(y)=y-y^{3}+6\alpha^{2}y$. One can see that $U''(y)=-6y$. $U(y)$ thus has one inflection point at $y_{s}=0$. Note that $V(0)=0$. Thus $U''(y)(V(y)-V(y_{s}))=-6y(y-y^{3})=-6y^{2}(1-y^{2}) < 0$ since $(1-y^{2})$ is everywhere positive in the interval $[-1/\sqrt{3},1/\sqrt{3}]$. Proposition \ref{prop_F_crit_alpha} says that this steady state, with pressure as in \eqref{pressuresteadyst} is possibly unstable for the $\alpha$-Euler equations.
\end{example}

\section{Arnold stability theorems for $\alpha$-Euler equations}
In the 1960's, V.\ I.\ Arnold in \cite{A65} introduced a simple and beautiful idea to study nonlinear Lyapunov stability of ideal fluids. It relied on exploiting the underlying Hamiltonian structure that the fluid model possessed together with convexity estimates on the second variation. For an overview of the Arnold criterion for the Euler equation, see, for example, \cite{A65}, \cite{MP94} (Section 3.2, pp. 104-111), \cite{AK98} (Chapter 2, Section 4, pp. 88-94), \cite{HMRW85} for a more mathematical perspective, \cite{T10}, (Section 1), \cite{B15} (Section 3, page 7), \cite{S00} (Section 4.5, pp. 114-122) for a more applied perspective. Expanded later into the so called energy-Casimir method, this has spawned a huge literature and has been applied widely to study the stability of various model fluid equations. For a non exhaustive sample, see for example \cite{CSS06, GW95, GW96, HMH14,HMR98,  MPSW01, MW01, WG98} as well as the literature cited therein. To the best of our knowledge, however, Arnold's stability theorems were not recorded for the $\alpha$-Euler model, and in this section we fill the gap. 
\subsection{Arnold's theorems in a multi connected domain}
Let $D \subset \mathbb R^{2}$ be a bounded multi connected domain, with the boundary $\partial D$ consisting of smooth curves $(\partial D)_{i}$ where $i=0,\ldots, n$. Denote the outer boundary of $D$ by $(\partial D)_0$ and $(\partial D)_{i}, i=1,\ldots,n$ are the $n$ inner boundaries. Let $\bv^{0}$ denote a steady state solution of \eqref{EEV_alpha}, $\phi^{0}$ denote its stream function, so that $\bv^{0}=-\nabla^{\perp}\phi^{0}$ and $\omega^{0}$ denote its vorticity, so that \[\omega^{0}=\curl (1-\alpha^{2}\Delta)\bv^{0}=-\Delta(1-\alpha^{2}\Delta) \phi^{0}.\]  
Since $\nabla^{\perp}\phi^{0}\cdot \nabla \omega^{0}=0$ by \eqref{EEV_alpha}, we have that $\nabla \phi^{0}$ is parallel to $\nabla \omega^{0}$ and thus locally, $\phi^{0}$ is a function of $\omega^{0}$. We shall impose the following global condition.
\begin{assumption}\label{steadystassumptionalpha}
Assume that there exists a differentiable function $F$ defined on the closed interval $\left[\mbox{min }_{(x,y) \in \overline{D}} \omega^{0}
(x,y),\mbox{max }_{(x,y) \in \overline{D}} \omega^{0}
(x,y)\right]$, such that, $\phi^{0}(x,y)=F(\omega^{0}(x,y))$ for every $(x,y) \in D$.
\end{assumption}
In particular, we have that $\nabla \phi^{0}(x,y)=F'(\omega^{0}(x,y))\nabla \omega^{0}(x,y)$ and thus $ \bv^{0}=-\nabla^{\perp}\phi^{0}=-F'(\omega^{0})\nabla^{\perp} \omega^{0}$. 
\begin{remark}
We note that sometimes, in the literature, see, for example, \cite{MP94}, (page 106, Remark 1) the function $-F'$ is written as the ratio of the vectors $\frac{\bv^{0}}{\nabla^{\perp}\omega^{0}}$.
\end{remark}
Notice that, $F'$, apriori, can have singularities at the critical points of $\omega^{0}$. Since $\bv \cdot \bn=0$ on the boundary $\partial D$, we have, on $\partial D$, that $\nabla^{\perp}\phi \cdot \bn =0$, i.e., $\nabla \phi \cdot \bt =0$, where $\bt$ is the unit tangent vector on the boundary. Thus $\nabla \phi$ is orthogonal to the boundary, i.e., each connected piece of the boundary curve is a level set of $\phi$ and thus $\phi|_{(\partial D)_{i}}$ is a constant on each connected boundary piece $(\partial D)_{i}$ where $0 \leq i \leq n$.
Assumption \ref{steadystassumptionalpha} then implies that $F(\omega^{0})$ restricted to each connected piece $(\partial D)_{i}$, $0 \leq i \leq n$, is a constant. We shall work on the following subspace of the Sobolev space 
\begin{align}\label{x}
X:= \bigg\{ \bv \in H^{3}(D;\mathbb R^{2}), & \nabla \cdot \bv =0  \mbox{ on } D, \nonumber \\& \bv \cdot \bn = 0,
(\bn \cdot \nabla \bv) \parallel \bn , \mbox{on } (\partial D)_{i}, 0 \leq i \leq n \bigg\},
\end{align}
so that $\bu \in H^{1}(D;\mathbb R^{2})$, $\omega=\curl \bu \in L^{2}(D;\mathbb R)$, $\phi \in H^{4}(D;\mathbb R)$ and $\psi \in H^{2}(D;\mathbb R)$. The analysis proceeds by considering the following functional $H_{c}:X \to \mathbb R$ 
\begin{equation} \label{hc}
H_{c}(\bv)= H(\bv)+Q(\bv)+ \sum_{i=0}^{n} a_{i} \int_{(\partial D)_{i}} (1-\alpha^{2}\Delta)\bv \cdot d \mathbf s,
\end{equation}
and $a_{i} \in \mathbb R$ for $i=0,\ldots,n$ where $H,Q:X \to \mathbb R$ on $X$ are given by 
\begin{align}\label{hq}
H(\bv)&=\frac{1}{2}\int_{D} \bv(\bx) \cdot (1-\alpha^{2}\Delta)\bv(\bx) d\bx,\\
Q(\bv)&=\int_{D} C(\curl (1-\alpha^{2}\Delta)\bv(\bx))d\bx, 
\end{align}
and a smooth function $C:\mathbb R \to \mathbb R$ will be fixed later. We note that $H(\bv)$ is the kinetic energy of the $\alpha$-Euler fluid, $C$ is known as the Casimir function, the last term in \eqref{hc} corresponds to the weighted sum of circulations along the boundary.
\begin{lemma}\label{H_inv}
Let $\bv=\bv(t,\bx)$ solve the $\alpha$-Euler equations \eqref{EE_alpha}. Then
\begin{align}\label{hc2}
H_{c}(\bv(t,\cdot))= & \nonumber
\frac{1}{2}\int_{D} \bv(t,\bx) \cdot (1-\alpha^{2}\Delta)\bv(t,\bx) d\bx + \int_{D} C(\curl (1-\alpha^{2}\Delta)\bv(t,\bx))d\bx
\\ \nonumber
&+ \sum_{i=0}^{n} a_{i} \int_{(\partial D)_{i}} (1-\alpha^{2}\Delta)\bv(\mathbf s,t) \cdot d \mathbf s, \numberthis 
\end{align}
is an invariant of motion, that is, $\frac{d}{dt}H_{c}(\bv(t,\cdot))=0$.
\end{lemma}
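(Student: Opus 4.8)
The plan is to treat the three summands of $H_c$ in \eqref{hc} separately and show that each is individually conserved along the flow. The weighted circulation term is the most immediate: each boundary circulation $\int_{(\partial D)_i}(1-\alpha^2\Delta)\bv\cdot d\mathbf s$ is already known to be constant in time by Corollary \ref{Kelvin_boundary}, the point being that the boundary curves $(\partial D)_i$ are invariant under the flow map generated by $\bv$ since $\bv\cdot\bn=0$ on $\partial D$. As the coefficients $a_i$ are constants, differentiating the sum in $t$ annihilates this contribution.

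For the Casimir term $Q(\bv)=\int_D C(\omega)\,d\bx$ with $\omega=\curl(1-\alpha^2\Delta)\bv$, I would differentiate under the integral sign and substitute the vorticity transport equation \eqref{EEV_alpha}, namely $\omega_t=-\bv\cdot\nabla\omega$, to obtain $\tfrac{d}{dt}Q=\int_D C'(\omega)\omega_t\,d\bx=-\int_D \bv\cdot\nabla\big(C(\omega)\big)\,d\bx$. Using $\div\bv=0$ the integrand equals $\div\big(C(\omega)\bv\big)$, and the divergence theorem turns the integral into $\int_{\partial D}C(\omega)(\bv\cdot\bn)\,ds$, which vanishes because $\bv\cdot\bn=0$. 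Hence $Q$ is conserved.

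For the kinetic energy $H(\bv)=\tfrac12\int_D \bv\cdot\bu\,d\bx$ with $\bu=(1-\alpha^2\Delta)\bv$, I would first invoke the self-adjointness identity \eqref{v1v2} to show that the two cross terms in $\tfrac{d}{dt}H=\tfrac12\int_D(\bv_t\cdot\bu+\bv\cdot\bu_t)\,d\bx$ coincide, giving $\tfrac{d}{dt}H=\int_D \bv\cdot\bu_t\,d\bx$; applying \eqref{v1v2} here requires that $\bv_t$ still satisfy the boundary conditions \eqref{eeabc}, which it does because those conditions are linear in $\bv$ and hold for all $t$. Next I substitute the momentum equation \eqref{EE_alpha}, $\bu_t=-(\bv\cdot\nabla)\bu-(\nabla\bv)^\top\bu-\nabla\pi$. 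The pressure contribution $-\int_D\bv\cdot\nabla\pi\,d\bx$ integrates (via $\div\bv=0$) to $-\int_{\partial D}\pi(\bv\cdot\bn)\,ds=0$, and the two nonlinear terms combine, through the componentwise identity $\bv\cdot\big[(\bv\cdot\nabla)\bu+(\nabla\bv)^\top\bu\big]=(\bv\cdot\nabla)(\bu\cdot\bv)$, into $\int_D\div\big((\bu\cdot\bv)\bv\big)\,d\bx$, which is again a boundary integral that vanishes by $\bv\cdot\bn=0$. This yields $\tfrac{d}{dt}H=0$, and combining the three pieces gives $\tfrac{d}{dt}H_c(\bv(t,\cdot))=0$.

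The decisive point is the algebraic cancellation in the energy balance: it is precisely the transpose term $(\nabla\bv)^\top\bu$ in \eqref{EE_alpha} that allows the advection and stretching contributions to recombine into the perfect divergence $(\bv\cdot\nabla)(\bu\cdot\bv)$. Without this term the energy would fail to be conserved; this is the same structural feature responsible for the Kelvin circulation theorem (cf.\ the Remark following Lemma \ref{Kelvin_Circ_Thm_Alpha}). A secondary technical point meriting care is the justification that $\bv_t$ inherits the boundary conditions \eqref{eeabc}, which is what licenses the use of \eqref{v1v2} in the first step of the energy computation.
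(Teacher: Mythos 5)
Your proposal is correct and follows essentially the same route as the paper: the same three-way decomposition of $H_c$, the Kelvin circulation corollary for the boundary terms, the transport-plus-divergence argument for the Casimir, and for the kinetic energy the same cancellation (your componentwise identity $\bv\cdot\big[(\bv\cdot\nabla)\bu+(\nabla\bv)^\top\bu\big]=(\bv\cdot\nabla)(\bu\cdot\bv)$ is exactly the paper's identity $(\bv\cdot\nabla)\bu+(\nabla\bv)^\top\bu=-\bv\times(\nabla\times\bu)+\nabla(\bu\cdot\bv)$ after dotting with $\bv$). The only difference is organizational — you reduce $\tfrac{d}{dt}H$ to $\int_D\bv\cdot\bu_t\,d\bx$ via the symmetry \eqref{v1v2} (correctly noting that $\bv_t$ inherits \eqref{eeabc}), whereas the paper rewrites $H$ as $\tfrac12\|\bv\|_2^2+\tfrac{\alpha^2}{2}\|\nabla\bv\|_2^2$ before differentiating; both land on the same identity \eqref{KE_identity}.
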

\begin{proof}
We note that 
$\bu=(1-\alpha^{2}\Delta)\bv$ and $\omega=\curl \bu = \curl (1-\alpha^{2}\Delta)\bv$.
We first show that $\frac{d}{dt}H(\bv(t,\cdot)) =0$. 
We start with the $\alpha$-Euler equations \eqref{EE_alpha},
\begin{equation}\label{alph_Eul_vel}
\partial_t\bu + \bv \cdot \nabla \bu + (\nabla \bv)^{\top} \bu = - \nabla \pi,
\end{equation}
and use the following identity (\cite[Eq. 7.34]{HMR98}) \[(\bv \cdot \nabla) \bu + (\nabla \bv)^{\top} \bu = - \bv \times (\nabla \times \bu) + \nabla (\bu \cdot \bv),\] to rewrite \eqref{alph_Eul_vel} as 
\begin{equation} \label{alph_vel_new}
\partial_t\bu - \bv \times (\nabla \times \bu) + \nabla (\bu \cdot \bv) = - \nabla \pi. 
\end{equation} Taking the dot product of \eqref{alph_vel_new} with $\bv$, noting that $- [\bv \times (\nabla \times \bu)]  \cdot \bv =0$, and integrate over the domain $D$ to get,
\begin{equation}\label{KE_identity}
 \int_{D}\bu_{t}(\bx) \cdot \bv(\bx)d \bx = 0,
\end{equation} 
where we used the facts that $\int_{D} \nabla (\bu \cdot \bv) \cdot \bv d \bx =0$ and $\int_{D} \nabla \pi \cdot \bv d \bx = 0$. Indeed, for any scalar valued function $f$, we have 
\begin{equation}\label{alpha5}
(\nabla f) \cdot \bv = \nabla \cdot (f \bv) - f \nabla \cdot \bv = \nabla \cdot (f \bv),
\end{equation} 
(because $\bv$ is divergence free, $\nabla \cdot \bv =0$) and by \eqref{eeabc} and Divergence Theorem, 
\begin{equation}\label{Div_thm}
\int_{D} \nabla \cdot (f \bv)d \bx = \sum_{i=0}^{n}\int_{(\partial D)_{i}} f \bv \cdot \bn ds = 0.
\end{equation}
We rewrite $H(\bv)$ as:
\begin{align*}
H(\bv)= &\frac{1}{2}\int_{D}\bu\cdot \bv d \bx = \frac{1}{2}\int_{D}(1-\alpha^{2}\Delta)\bv\cdot \bv d \bx \\&
= \frac{1}{2}\int_{D}\bv \cdot \bv d \bx + \frac{\alpha^{2}}{2} \int_{D} |\nabla \bv|^{2}  d \bx \numberthis \label{H}
\end{align*} 
where in the second equality, we performed integration by parts on the second term and use \eqref{int_by_parts_alph_Eul}. Equation \eqref{eeabc} implies that $\bv \cdot ((\bn\cdot \nabla)\bv)=0$. Using \eqref{eeabc} it can also be shown that
\begin{equation}\label{eeabct}
\bv_{t}\cdot \bn=0, \quad \bv_{t} \cdot ((\bn\cdot \nabla)\bv) =0, \quad \bv \cdot ((\bn\cdot \nabla)\bv_{t})=0.
\end{equation}
Indeed, $0=\partial_{t}(\bv \cdot \bn)=\bv_{t} \cdot \bn$ and using this and the second equation in \eqref{eeabc}, we see that $\bv_{t} \cdot ((\bn\cdot \nabla)\bv) =0$. Now use the fact that $\partial_{t}(\bv \cdot (\bn\cdot \nabla\bv))=0$ and $\bv_{t} \cdot ((\bn\cdot \nabla)\bv) =0$ to conclude the third equality in \eqref{eeabct}.
Recall that $|\nabla \bv|^{2}=tr(\nabla \bv \cdot (\nabla \bv)^{T})$. Also notice that \[\frac{1}{2}\partial_{t} tr(\nabla \bv \cdot (\nabla \bv)^{T}) = tr(\nabla \bv_{t} \cdot (\nabla \bv)^{T})\] and integration by parts using the boundary conditions \eqref{eeabct} yields 
\begin{equation}\label{tr}
\frac{1}{2}\int_{D} \partial_{t} tr(\nabla \bv \cdot (\nabla \bv)^{T}) d \bx = \int_{D} tr(\nabla \bv_{t} \cdot (\nabla \bv)^{T}) d \bx = - \int_{D} \Delta \bv_{t} \cdot \bv d \bx.
\end{equation}
We thus have, by \eqref{KE_identity}, \eqref{H} and \eqref{tr}, 
\begin{align*}
\frac{d}{dt}H(\bv(t,\cdot)) 
&=\frac{1}{2} \int_{D} (\bv(\bx,t)\cdot \bv(\bx,t))_{t} d \bx + \frac{\alpha^{2}}{2} \int_{D}  \partial_{t}|(\nabla \bv(\bx,t)|^{2}  d \bx \\
&= \int_{D} \bv_{t}(\bx,t)\cdot \bv(\bx,t) d \bx +\frac{\alpha^{2}}{2} \int_{D} \partial_{t} tr(\nabla \bv(\bx,t) \cdot (\nabla \bv(\bx,t))^{T})  d \bx\\
&= \int_{D} \bv_{t}(\bx,t)\cdot \bv(\bx,t) d \bx -\alpha^{2} \int_{D} \Delta \bv_{t}(\bx,t) \cdot \bv(\bx,t) d \bx \\&= \int_{D} \big(\bv_{t}(\bx,t) - \alpha^{2}\Delta \bv_{t}(\bx,t)\big)\cdot \bv(\bx,t) d \bx = \int_{D} \bu_{t}(\bx,t) \cdot \bv(\bx,t) d \bx = 0.
\end{align*}
In order to prove that $\frac{d}{dt}\int_{D} C(\omega(\bx,t)) d x = 0$, we use (a slight variant of) the following idea from \cite[see Appendix, page 162]{CSS06}. 
Using \eqref{EEV_alpha} we have, 
\begin{equation}\label{Pvortalphaeuler}
\partial_{t}C(\omega(\bx,t)) = C'(\omega(\bx,t))\partial_{t} \omega(\bx,t) = -C'(\omega(\bx,t))\bv \cdot \nabla\omega(\bx,t),
\end{equation}
and also, \[-\bv \cdot \nabla (C(\omega(\bx,t)))= -C'(\omega(\bx,t))\bv \cdot \nabla\omega(\bx,t).\] Therefore,
\begin{equation}\label{pvortalphaeuler2}
\partial_{t}C(\omega(\bx,t)) = -\bv \cdot \nabla (C(\omega(\bx,t))).
\end{equation}
Using \eqref{alpha5}, \eqref{Div_thm}, \eqref{Pvortalphaeuler} and \eqref{pvortalphaeuler2}, we infer
\begin{align*}
\frac{d}{dt}\int_{D}C(\omega(\bx,t)) d x = \int_{D} \partial_{t} C(\omega(\bx,t)) d x  = \int_{D}  - \bv \cdot \nabla C(\omega(\bx,t)) d x=0.
\end{align*} 
The fact that $\frac{d}{dt} \int_{(\partial D)_{i}} \bu \cdot d \mathbf s=0$, $i=0,\ldots,n$, is a consequence of Corollary \ref{Kelvin_boundary}. Combining these relations proves the lemma.
\end{proof}
Since $\bv^{0}\cdot \nabla \omega^{0}=0$ and $\bv^{0}$ is tangent to the boundary, we have that $\nabla \omega^{0}$ is orthogonal to the boundary, which implies that $\omega^{0}$ is constant on the boundary $(\partial D)_{i}$, $i=0,\ldots,n$, i.e., we shall denote by $\omega^{0}\big|_{(\partial D)_{i}}$ the value of $\omega^{0}$ on the boundary $(\partial D)_{i}$. This also implies that $C'(\omega^{0}(x,y))$ is constant for all $(x,y) \in (\partial D)_{i}$ and we shall denote this by $C'(\omega^{0})\big|_{(\partial D)_{i}}$. 
We return to \eqref{hc}. We will now specify $C$ and $a_{j}$, such that the first variation $\delta H_{c}(\bv^{0})\delta \bv := \frac{d}{d\epsilon}H_{c}(\bv^{0}+\epsilon\delta \bv)|_{\epsilon=0}$ is zero. 
\begin{lemma}\label{critical pt conditions}
Let $\bv^{0}$, $\omega^{0}$ be a steady state solution of \eqref{EE_alpha}, satisfying Assumption \ref{steadystassumptionalpha}, where $\omega^{0}=\curl(1-\alpha^{2}\Delta)\bv^{0}$. 
Let $C$ be a smooth function  so that 
\begin{equation}\label{Cg}
C'(\omega^{0}(x,y))=-F(\omega^{0}(x,y)),
\end{equation} for every $(x,y) \in D$.  
Let $a_{i}=F(\omega^{0})\big|_{(\partial D)_{i}}$, $i=0,\ldots,n$. Then $\delta H_{c}(\bv^{0})\delta \bv=0$, i.e $\bv^{0}$ is a critical point of $H_{c}$. 
\end{lemma}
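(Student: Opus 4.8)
The plan is to differentiate each of the three pieces of $H_{c}$ in \eqref{hc} along an admissible variation $\delta\bv$ (tangent to $X$, hence divergence free and satisfying \eqref{eeabc}) and to watch the contributions cancel in pairs. Write $\delta\bu:=(1-\alpha^{2}\Delta)\delta\bv$ and $\delta\omega:=\curl\delta\bu$. Differentiating the kinetic energy $H$ from \eqref{hq} at $\epsilon=0$ produces $\tfrac12\int_{D}\delta\bv\cdot(1-\alpha^{2}\Delta)\bv^{0}\,d\bx+\tfrac12\int_{D}\bv^{0}\cdot(1-\alpha^{2}\Delta)\delta\bv\,d\bx$; since $\bv^{0}$ and $\delta\bv$ both satisfy \eqref{eeabc}, the self-adjointness relation \eqref{v1v2} shows the two terms are equal, so that
\[
\delta H(\bv^{0})\delta\bv=\int_{D}\delta\bv\cdot\bu^{0}\,d\bx=\int_{D}\delta\bu\cdot\bv^{0}\,d\bx,
\]
where the last equality is again \eqref{v1v2}. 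Differentiating $Q$ gives $\delta Q(\bv^{0})\delta\bv=\int_{D}C'(\omega^{0})\,\delta\omega\,d\bx$, and the boundary term in \eqref{hc} contributes $\sum_{i=0}^{n}a_{i}\int_{(\partial D)_{i}}\delta\bu\cdot d\mathbf s$.

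Next I would insert the prescribed data. By Assumption \ref{steadystassumptionalpha} we have $\phi^{0}=F(\omega^{0})$, so \eqref{Cg} reads $C'(\omega^{0})=-\phi^{0}$, and the choice $a_{i}=F(\omega^{0})\big|_{(\partial D)_{i}}=\phi^{0}\big|_{(\partial D)_{i}}$ is exactly the (constant) value of $\phi^{0}$ on $(\partial D)_{i}$. Thus $\delta Q(\bv^{0})\delta\bv=-\int_{D}\phi^{0}\,\curl\delta\bu\,d\bx$. The decisive step is the integration-by-parts identity for the curl,
\[
\int_{D}\phi^{0}\,\curl\delta\bu\,d\bx=\sum_{i=0}^{n}\int_{(\partial D)_{i}}\phi^{0}\,\delta\bu\cdot d\mathbf s-\int_{D}\nt\phi^{0}\cdot\delta\bu\,d\bx,
\]
which follows from the pointwise identity $\curl(\phi^{0}\delta\bu)=\phi^{0}\curl\delta\bu+\nt\phi^{0}\cdot\delta\bu$ together with the circulation form of Green's theorem, $\int_{D}\curl\mathbf g\,d\bx=\sum_{i}\int_{(\partial D)_{i}}\mathbf g\cdot d\mathbf s$.

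Because $\phi^{0}$ is constant on each $(\partial D)_{i}$ with value $a_{i}$, the boundary integrals satisfy $\int_{(\partial D)_{i}}\phi^{0}\,\delta\bu\cdot d\mathbf s=a_{i}\int_{(\partial D)_{i}}\delta\bu\cdot d\mathbf s$, which coincide exactly with the boundary contribution coming from \eqref{hc}. Hence
\[
\delta Q(\bv^{0})\delta\bv+\sum_{i=0}^{n}a_{i}\int_{(\partial D)_{i}}\delta\bu\cdot d\mathbf s=\int_{D}\nt\phi^{0}\cdot\delta\bu\,d\bx=-\int_{D}\bv^{0}\cdot\delta\bu\,d\bx,
\]
using $\bv^{0}=-\nt\phi^{0}$. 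Adding $\delta H(\bv^{0})\delta\bv=\int_{D}\delta\bu\cdot\bv^{0}\,d\bx$ from the first paragraph cancels this term exactly, giving $\delta H_{c}(\bv^{0})\delta\bv=0$.

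The main obstacle is arranging the exact cancellation of the interior boundary terms. This rests on two facts established earlier: that $\bv^{0}\cdot\bn=0$ forces $\phi^{0}=F(\omega^{0})$ to be constant on each connected boundary curve, so that the scalar $a_{i}$ can be pulled inside the line integral; and that the circulation boundary term generated by integrating $\curl$ by parts is precisely the weighted sum of boundary circulations appearing in \eqref{hc}, which is exactly why the weights $a_{i}$ are chosen to equal $F(\omega^{0})\big|_{(\partial D)_{i}}$. One should also verify at the outset that admissible variations $\delta\bv$ genuinely lie in the tangent space to $X$, so that the symmetry relation \eqref{v1v2} may be applied both in simplifying $\delta H$ and in passing between $\bu^{0}$ and $\bv^{0}$.
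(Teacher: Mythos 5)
Your proof is correct and follows essentially the same route as the paper: the same term-by-term first variation, the symmetry relation \eqref{v1v2} to rewrite $\int_D \bu^{0}\cdot\delta\bv\,d\bx$ as $\int_D \bv^{0}\cdot\delta\bu\,d\bx$, and the identity $\curl(f\,\delta\bu)=\nt f\cdot\delta\bu+f\curl\delta\bu$ combined with Stokes' theorem to convert $\int_D C'(\omega^{0})\,\delta\omega\,d\bx$ into an interior term cancelling the kinetic contribution plus boundary circulations cancelling the $a_i$ terms. The only (harmless) difference is that you substitute $C'(\omega^{0})=-\phi^{0}$ before integrating by parts, whereas the paper keeps $C'$ abstract, derives the conditions $\bv^{0}=C''(\omega^{0})\nt\omega^{0}$ and $a_i=-C'(\omega^{0})|_{(\partial D)_i}$, and only then verifies them from \eqref{Cg}.
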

\begin{proof}
Note first that $H_{c}(\bv)$ can be expressed, using $\bu=(1-\alpha^{2}\Delta)\bv$, as
\begin{equation}
H_{c}(\bv)= \frac{1}{2}\int_{D} \bv \cdot \bu d\bx + \int_{D} C(\omega)d\bx + \sum_{i=0}^{n} a_{i} \int_{(\partial D)_{i}} \bu \cdot d \mathbf s.
\end{equation}
The first variation of $H_{c}$ at $\bv^{0}$ is given by the following expression,
\begin{align}\label{H_first_var}
& \delta H_{c}(\bv^{0})\delta \bv=\nonumber \frac{d}{d \varepsilon}H_{c}(\bv^{0}+\varepsilon \delta \bv)\big|_{\varepsilon=0}\\ \nonumber
&= \frac{1}{2}\int_{D} \bv^{0} \cdot \delta\bu d \bx + \frac{1}{2}\int_{D} \bu^{0} \cdot \delta\bv d \bx 
+\int_{D} C'(\omega^{0}) \delta \omega d \bx + \sum_{i=0}^{n} a_{i} \int_{(\partial D)_{i}} \delta \bu \cdot d \mathbf s, \numberthis
\end{align}
where $\delta\bu=(1-\alpha^{2}\Delta) \delta\bv$ and $\delta \omega = \curl \delta \bu$. 
We will be using the following identity (see \cite[Eq 2.14, page 108]{MP94}), 
\begin{equation}\label{C1}
C'(\omega^{0}) \delta \omega = \curl (C'(\omega^{0})\delta \bu) - C''(\omega^{0})\nabla^{\perp}\omega^{0}\cdot \delta \bu,
\end{equation} 
which follows from the identity 
$\curl (f \bv) = \nabla^{\perp}f \cdot \bv + f \curl \bv$.
Noting that, by Stokes' theorem, 
\begin{equation}\label{C2}
\int_{D} \curl (C'(\omega^{0})\delta \bu)d \bx=\sum_{i=0}^{n}\int_{ (\partial D)_{i}} C'(\omega^{0})\delta \bu \cdot d \mathbf s,
\end{equation} 
we see that, using \eqref{C1} and \eqref{C2}
\begin{align*}
\int_{D} C'(\omega^{0}) \delta \omega d \bx = - \int_{D} C''(\omega^{0})\nabla^{\perp}\omega^{0} \cdot \delta \bu d \bx 
+\sum_{i=0}^{n}\int_{(\partial D)_{i}} C'(\omega^{0} )\delta \bu \cdot d \mathbf s. \numberthis \label{C3}
\end{align*}
We integrate by parts and use \eqref{v1v2}. Thus,
\begin{equation}\label{C4}
 \int_{D} \bu^{0} \cdot \delta \bv d\bx  =  \int_{D} (\bv^{0}- \alpha^{2} \Delta \bv^{0}) \cdot \delta \bv d\bx =\int_{D} \bv^{0} \cdot \delta \bu d\bx.
\end{equation}
Using \eqref{C4} and \eqref{C3}, we see that \eqref{H_first_var} is given by, 
\begin{align}\label{H_first_var_new}
\delta H_{c}(\bv^{0})\delta \bv \nonumber 
&= \int_{D} \bv^{0} \cdot \delta\bu d \bx - \int_{D}  C''(\omega^{0}) \nabla^{\perp}(\omega^{0}) \delta \bu d \bx \\&+ \sum_{i=0}^{n} \int_{(\partial D)_{i}} C'(\omega^{0}) \delta \bu \cdot d \mathbf s + \sum_{i=0}^{n} a_{i} \int_{(\partial D)_{i}} \delta \bu \cdot d \mathbf s. 
\end{align}
Since $\bv^{0}\cdot \nabla \omega^{0}=0$, and $\bv^{0}$ is tangent to the boundary, this means that $\nabla \omega^{0}$ is orthogonal to the boundary and thus $\omega^{0}$ is a constant on the boundary. This then implies that,
\begin{align}\label{H_first_var_new2}
\delta H_{c}(\bv^{0})\delta \bv \nonumber 
&= \int_{D} \bv^{0} \cdot \delta\bu d \bx - \int_{D}  C''(\omega^{0}) \nabla^{\perp}(\omega^{0}) \delta \bu d \bx \\&+ \sum_{i=0}^{n} C'(\omega^{0})|_{(\partial D)_{i}}\int_{(\partial D)_{i}}  \delta \bu \cdot d \mathbf s + \sum_{i=0}^{n} a_{i} \int_{(\partial D)_{i}} \delta \bu \cdot d \mathbf s, 
\end{align}
from which we see that $\delta H_{c}(\bv^{0})\delta \bv=0$ provided, 
\begin{equation}\label{first_var_identity}
\bv^{0}(x,y)=C''(\omega^{0}(x,y))\nabla^{\perp}\omega^{0}(x,y),
\end{equation} %
\begin{equation}\label{first_var_circ}
a_{i}  = - C'(\omega^{0}),\quad i=0,\ldots,n.
\end{equation}
Since, by \eqref{Cg}, $C$ is chosen such that $C'(\omega^{0}(x,y))=-F(\omega^{0}(x,y))$ for every $(x,y) \in D$, then, \[\bv^{0}=-\nabla^{\perp}\phi^{0}=-F'(\omega^{0})\nabla^{\perp}\omega^{0}=C''(\omega^{0})\nabla^{\perp}\omega^{0},\] i.e., \eqref{first_var_identity} holds. Since we have chosen $a_{i}=\psi^{0}|_{(\partial D)_{i}}$ for all $0 \leq i \leq n$, (note that $\psi^{0}$ is a constant on the boundary curves) which then implies that $a_{i}=-C'(\omega^{0})|_{(\partial D)_{i}}$, then the first variation $\delta H_{c}(\bv^{0})\delta \bv =0$.
\end{proof}
We denote $\|\nabla\bv\|_{2}^{2}:= \int_{D}|\nabla\bv(\bx)|^{2}d\bx$, where $|\nabla \bv|^{2}=tr(\nabla \bv \cdot (\nabla \bv)^{T})$. We are now ready to prove Arnold's first stability theorem for $\alpha$-Euler. 
\begin{theorem}\label{ArnoldIalphaeuler}
Let $\bv^{0}$ be a steady state solution of the $\alpha$-Euler equations \eqref{EE_alpha} on the multi connected domain $D$, satisfying Assumption \ref{steadystassumptionalpha}.
Suppose that
\begin{equation}\label{c1c2euleralpha}
0< \inf\limits_{(x,y)\in D}-F'(\omega^{0}(x,y)) \leq \sup\limits_{(x,y)\in D}-F'(\omega^{0}(x,y)) < + \infty.
\end{equation}
Then there exists a constant $K>0$, such that if $\bv(\cdot,t)=\bv^{0}+\delta \bv(\cdot,t)$, $t \in I$ solves the $\alpha$-Euler equations \eqref{EE_alpha} on $D$ then one has the following estimate for all times $t \in I$,
\begin{align*}\label{esteuleralpha}
&||\bv(\cdot,t)-\bv^{0}||^{2}_{2} + \alpha^{2}||\nabla(\bv(\cdot,t)-\bv^{0})||^{2}_{2} + ||\omega(\cdot,t)-\omega^{0}||^{2}_{2} \\& \leq K (||\bv(\cdot,0)-\bv^{0}||^{2}_{2} + \alpha^{2}||\nabla(\bv(\cdot,0)-\bv^{0})||^{2}_{2}+ ||\omega(\cdot,0)-\omega^{0}||^{2}_{2}), \numberthis
\end{align*}
where $\omega^{0}=\curl (1-\alpha^{2}\Delta)\bv^{0}$ and $\omega(\cdot,t)=\curl (1-\alpha^{2}\Delta)\bv(\cdot,t)$. 
\end{theorem}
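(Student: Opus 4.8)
The plan is to run the classical energy--Casimir (Arnold) argument, now that its two ingredients are in place: $H_c$ is conserved along solutions by Lemma \ref{H_inv}, and $\bv^0$ is a critical point of $H_c$ for the choices of $C$ and $a_i$ made in Lemma \ref{critical pt conditions}. Writing $\delta\bv=\bv(\cdot,t)-\bv^0$ and $\delta\omega=\omega(\cdot,t)-\omega^0=\curl(1-\alpha^2\Delta)\delta\bv$, I would first expand the conserved functional about the steady state. Because $H$ is an exact \emph{symmetric} quadratic form in $\bv$ (symmetry coming from \eqref{v1v2}) and the circulation terms in \eqref{hc} are linear in $\bv$, the only nonquadratic contribution comes from the Casimir $Q$; a Taylor expansion of $C$ at $\omega^0$ then yields the exact identity
\begin{equation*}
H_c(\bv)-H_c(\bv^0)=H(\delta\bv)+\int_D\big[C(\omega^0+\delta\omega)-C(\omega^0)-C'(\omega^0)\,\delta\omega\big]\,d\bx,
\end{equation*}
in which all first--variation terms cancel precisely by Lemma \ref{critical pt conditions}.

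Next I would evaluate the two pieces on the right. Since the space $X$ of \eqref{x} is a linear subspace and both $\bv(\cdot,t)$ and $\bv^0$ lie in it, the difference $\delta\bv$ inherits the boundary conditions built into $X$, so formula \eqref{H} applies verbatim to $\delta\bv$ and gives $H(\delta\bv)=\tfrac12\|\delta\bv\|_2^2+\tfrac{\alpha^2}{2}\|\nabla\delta\bv\|_2^2$. For the Casimir integral, differentiating the defining relation \eqref{Cg} gives $C''(\omega^0)=-F'(\omega^0)$, so the hypothesis \eqref{c1c2euleralpha} says exactly that $C''$ is pinched between the two positive constants $c_1:=\inf_D(-F'(\omega^0))$ and $c_2:=\sup_D(-F'(\omega^0))$ on the range of $\omega^0$; arranging $C$ off that range so that $c_1\le C''\le c_2$ holds globally and applying Taylor's theorem with remainder pointwise yields
\begin{equation*}
\tfrac{c_1}{2}\,|\delta\omega|^2\le C(\omega^0+\delta\omega)-C(\omega^0)-C'(\omega^0)\,\delta\omega\le \tfrac{c_2}{2}\,|\delta\omega|^2 .
\end{equation*}

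Finally I would exploit conservation. By Lemma \ref{H_inv} the quantity $H_c(\bv(\cdot,t))-H_c(\bv^0)$ is independent of $t$, hence so is
\begin{equation*}
E(t):=\tfrac12\|\delta\bv(\cdot,t)\|_2^2+\tfrac{\alpha^2}{2}\|\nabla\delta\bv(\cdot,t)\|_2^2+\int_D\big[C(\omega^0+\delta\omega)-C(\omega^0)-C'(\omega^0)\delta\omega\big]\,d\bx .
\end{equation*}
Bounding $E(t)$ below with $c_1$ and $E(0)$ above with $c_2$, and setting $m:=\tfrac12\min(1,c_1)$, $M:=\tfrac12\max(1,c_2)$, gives
\begin{equation*}
m\big(\|\delta\bv(t)\|_2^2+\alpha^2\|\nabla\delta\bv(t)\|_2^2+\|\delta\omega(t)\|_2^2\big)\le E(t)=E(0)\le M\big(\|\delta\bv(0)\|_2^2+\alpha^2\|\nabla\delta\bv(0)\|_2^2+\|\delta\omega(0)\|_2^2\big),
\end{equation*}
which is the asserted estimate with $K=M/m=\max(1,c_2)/\min(1,c_1)$.

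I expect the main obstacle to be the convexity step. The hypothesis \eqref{c1c2euleralpha} only controls $C''=-F'$ on the range of $\omega^0$, whereas the Taylor remainder is evaluated at intermediate values lying between $\omega^0$ and $\omega^0+\delta\omega$, which may leave that range. One must therefore argue that $C$ can be redefined off the range of $\omega^0$ so as to keep $c_1\le C''\le c_2$ globally without disturbing \eqref{Cg} (which fixes only $C'$ on the range) and without affecting conservation or the vanishing of the first variation. A secondary point to check carefully is that $\delta\bv$ genuinely satisfies the boundary conditions of $X$, so that the integration-by-parts identities \eqref{int_by_parts_alph_Eul} and \eqref{v1v2} underpinning \eqref{H} remain valid for the difference.
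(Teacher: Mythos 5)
Your proposal is correct and follows essentially the same route as the paper's proof: expand the conserved functional $H_c$ about the critical point $\bv^0$, kill the first-order terms via Lemma \ref{critical pt conditions}, identify the quadratic part with $\tfrac12\|\delta\bv\|_2^2+\tfrac{\alpha^2}{2}\|\nabla\delta\bv\|_2^2$ via \eqref{int_by_parts_alph_Eul}, pinch the Taylor remainder of $C$ using a global extension with $K_1\le C''\le K_2$ (the paper extends $F$ linearly off the range of $\omega^0$, exactly the fix you flag), and conclude by conservation with the same constant $K=\max(1,K_2)/\min(1,K_1)$.
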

\begin{proof}
Let $K_{1}:= \inf_{(x,y) \in D} (-F'(\omega^{0}(x,y)))$ and $K_{2}:= \sup_{(x,y) \in D} (-F'(\omega^{0}(x,y)))$. 
Let $H_{c}$ be defined as in \eqref{hc}, and choose $C$ and $a_{i}$ as in Lemma \ref{critical pt conditions}. Since the range of $\omega^{0}$ is a connected set, \eqref{c1c2euleralpha} is equivalent to,
\begin{equation}\label{K1K2}
0<K_{1} \leq -F'(\xi) \leq K_{2}<+\infty,
\end{equation} for $\xi \in [\min\limits_{(x,y) \in \overline{D}} \omega^{0}(x,y),\max\limits_{(x,y) \in \overline{D}} \omega^{0}(x,y)]$. Using \eqref{c1c2euleralpha}, we may extend $C$ from the range of $\omega^{0}$ to $\mathbb R$ such that,
\begin{equation}\label{1C''}
K_{1} \leq C''(z) \leq K_{2},
\end{equation}
holds for every $z \in \mathbb R$. Indeed, we first extend $F$ linearly outside \[[\min\limits_{(x,y) \in \overline{D}} \omega^{0}(x,y),\max\limits_{(x,y) \in \overline{D}} \omega^{0}(x,y)]\]  to all of $\mathbb R$. We then choose $C$ such that $C'(\xi)=-F(\xi)$ for all $\xi \in \mathbb R$.
By Lemma \ref{H_inv}, $H_{c}$ is an invariant of motion and by Lemma \ref{critical pt conditions}, $\bv^{0}$ is a critical point of $H_{c}$, i.e., $\delta H_{c}(\bv^{0}) \delta \bv=0$.
Equation \eqref{hc} gives
\begin{align*}
H_{c}(\bv)-H_{c}(\bv^{0})&= \frac{1}{2}\int_{D} \bu(\bx) \cdot \bv(\bx) d \bx - \frac{1}{2}\int_{D} \bu^{0}(\bx) \cdot \bv^{0}(\bx) d \bx\\&+\int_{D} (C(\omega(\bx))-C(\omega^{0}(\bx))) d \bx+\sum_{i=0}^{n} a_{i}\int_{(\partial D)_{i}} (\bu (\mathbf s) -\bu^{0}(\mathbf s)) \cdot d \mathbf s \\
&= \frac{1}{2}\int_{D} (\bu(\bx) - \bu^{0}(\bx)) \cdot (\bv(\bx)- \bv^{0}(\bx)) d \bx \\&+ \frac{1}{2}\int_{D} \big( \bv^{0}(\bx) \cdot (\bu(\bx) - \bu^{0}(\bx)) + \bu^{0}(\bx) \cdot (\bv(\bx)-\bv^{0}(\bx)) \big) d \bx \\&+\int_{D} C'(\omega^{0}(\bx))(\omega(\bx)-\omega^{0}(\bx)) d \bx + \frac{1}{2} \int_{D} C''(\xi)(\omega(\bx)-\omega^{0}(\bx))^{2} d \bx \\&+\sum_{i=0}^{n} a_{i}\int_{(\partial D)_{i}} (\bu(\mathbf s) -\bu^{0}(\mathbf s)) \cdot d \mathbf s,
\end{align*}
where we have Taylor expanded $C(\omega(\bx))-C(\omega^{0}(\bx))$ and $\xi$ depends on $\omega^{0}(\bx)$ and $\omega(\bx)$. By virtue of the fact that the first variation is zero at $\bv^{0}$, using\eqref{H_first_var_new}, with $\delta \bv = \bv -\bv^{0}$ and $\delta \bu = \bu -\bu^{0}$  we have that, 
\begin{align*}
&\frac{1}{2}\int_{D} \bigg(\bv^{0}(\bx) \cdot (\bu(\bx) - \bu^{0}(\bx)) + \bu^{0}(\bx) \cdot (\bv(\bx)-\bv^{0}(\bx))\bigg) d \bx \\&+\int_{D} C'(\omega^{0}(\bx))(\omega(\bx)-\omega^{0}(\bx)) d \bx + \sum_{i=0}^{n} a_{i}\int_{(\partial D)_{i}} (\bu(\mathbf s) -\bu^{0}(\mathbf s)) \cdot d \mathbf s=0,
\end{align*}
whence, 
\begin{align}\label{Hc}
H_{c}(\bv)-H_{c}(\bv^{0})&=\frac{1}{2}\int_{D} (\bu(\bx) - \bu^{0}(\bx)) \cdot (\bv(\bx)- \bv^{0}(\bx)) d \bx \\ \nonumber &+ \frac{1}{2} \int_{D} C''(\xi)(\omega(\bx)-\omega^{0}(\bx))^{2} d \bx. 
\end{align}
Using the fact that $\bu-\bu^{0} = (1-\alpha^{2}\Delta)  (\bv - \bv^{0})$ and integrating the second term by parts, we see, using  \eqref{int_by_parts_alph_Eul}, that 
$\int_{D} \Delta (\bv(\bx)-\bv^{0}(\bx)) \cdot ( \bv(\bx)-\bv^{0}(\bx))d \bx = - \int_{D} |\nabla  (\bv(\bx) -\bv^{0}(\bx))|^{2} d \bx$.
Thus, 
\begin{align}\label{Pos_def_form}
& H_{c}(\bv)-H_{c}(\bv^{0})= \frac{1}{2}\int_{D}   (\bv(\bx)-\bv^{0}(\bx)) \cdot (\bv(\bx)-\bv^{0}(\bx)) d \bx  \\
&+ \frac{1}{2}\int_{D} \alpha^{2} | \nabla  (\bv(\bx)-\bv^{0}(\bx)) |^{2} d \bx +\frac{1}{2}\int_{D} C''(\xi)(\omega(\bx)-\omega^{0}(\bx))^{2} d \bx. \nonumber
\end{align}
Thus, using \eqref{1C''}
\begin{align*} 
&\frac{1}{2} \int_{D} | \bv(\bx)-\bv^{0}(\bx)|^{2}d \bx + \frac{\alpha^{2}}{2} \int_{D} | \nabla (\bv(\bx)-\bv^{0}(\bx))|^{2}d \bx  \\& +\frac{K_{1}}{2} \int_{D}(\omega(\bx)-\omega^{0}(\bx))^{2} d \bx  \leq H_{c}(\bv)-H_{c}(\bv^{0})  \leq \frac{1}{2} \int_{D} | \bv(\bx)-\bv^{0}(\bx)|^{2} d \bx \\&+ \frac{\alpha^{2}}{2} \int_{D} | \nabla (\bv(\bx)-\bv^{0}(\bx))|^{2} d \bx + \frac{K_{2}}{2} \int_{D}(\omega(\bx)-\omega^{0}(\bx))^{2} d \bx.
\end{align*} 
We now let $\beta_{1}= \min(\frac{1}{2},\frac{K_{1}}{2})$ and $\beta_{2}= \max(\frac{1}{2},\frac{K_{2}}{2})$ and see that, 
\begin{align*}
&\beta_{1}(\| \bv -\bv^{0}\|_{2}^{2} + \alpha^{2}\| \nabla (\bv -\bv^{0})\|_{2}^{2}+\| \omega -\omega^{0}\|_{2}^{2}) \leq H_{c}(\bv)-H_{c}(\bv^{0})\\ 
&\leq  \beta_{2}(\| \bv -\bv^{0}\|_{2}^{2} + \alpha^{2}\| \nabla (\bv -\bv^{0})\|_{2}^{2} + \| \omega -\omega^{0}\|_{2}^{2}).\numberthis \label{Hamiltonian_bound}
\end{align*} 
We now use Lemma \ref{H_inv}, (the fact that the Hamiltonian is a temporal invariant of the motion) to get, for any time $t \in I$, 
\begin{align*} 
&(\| \bv(t) -\bv^{0}\|_{2}^{2} + \alpha^{2}\| \nabla (\bv(t) -\bv^{0})\|_{2}^{2}+\| \omega(t) -\omega^{0}\|_{2}^{2}) \\&\leq \beta_{1}^{-1}(H_{c}(\bv(t))-H_{c}(\bv^{0}))
 =\beta_{1}^{-1}(H_{c}(\bv(0))-H_{c}(\bv^{0})) \\&\leq \beta_{2}\beta_{1}^{-1} (\| \bv(0) -\bv^{0}\|_{2}^{2} + \alpha^{2}\| \nabla (\bv(0) -\bv^{0})\|_{2}^{2}+\| \omega(0) -\omega^{0}\|_{2}^{2}).
\end{align*}
From this, \eqref{esteuleralpha} follows by putting $K=\beta_{2}\beta_{1}^{-1}$. %
\end{proof}
We will now address Arnold's second theorem for $\alpha$-Euler. 
We compute the second variation of $H_c$, where $\delta \bu =(1-\alpha^{2}\Delta)\delta \bv $ and $\delta \omega = \curl (1-\alpha^{2}\Delta)\delta \bv $,
\begin{align*}
&\delta^{2} H_{c}(\bv^{0})( \delta \bv, \delta \bv)  := \frac{d^{2}}{d \varepsilon^{2}}H(\bv^{0}+\varepsilon \delta \bv)\big|_{\varepsilon=0} \\
&=\frac{1}{2}\int_{D} \delta \bv \cdot \delta \bu d \bx+\frac{1}{2}\int_{D} \delta \bu \cdot \delta \bv d \bx + \int_{D}  C''(\omega^{0}) \delta \omega \delta \omega d \bx \\
&= \int_{D} (\delta \bv) \cdot \delta \bv d \bx + \alpha^{2} \int_{D} |\nabla \delta \bv|^{2}  d \bx +  \int_{D}  C''(\omega^{0}) \delta \omega \delta \omega d \bx, \numberthis \label{H_second_var}
\end{align*}
where, we integrate by parts using \eqref{int_by_parts_alph_Eul}.
\begin{remark}\label{perturb_alpha}
We note that the second variation defines the following quadratic form $K(\bv^{0})$ on the space $X$ defined in \eqref{x}.
\begin{align}\label{qf}
&K(\bv^{0})(\bv,\bv) := \delta^{2}H_{c}(\bv^{0})(\bv,\bv)= \int_{D} \bv \cdot \bv d \bx \\ \nonumber
&+ \alpha^{2} \int_{D} |\nabla \bv|^{2}  d \bx + \int_{D}C''(\omega^{0})(\curl(1-\alpha^{2}\Delta) \bv)^{2}  d \bx. 
\end{align}
Under assumption \eqref{c1c2euleralpha}, the second variation defined by \eqref{qf} is bounded and positive definite on the space $X$. Note that if there exists a $\bv \neq 0 \in X$ such that $\curl (1-\alpha^{2}\Delta) \bv=0$, then the value of the quadratic form reduces to $\int_{D} \bv \cdot \bv d \bx+ \alpha^{2} \int_{D} \nabla \bv \cdot \nabla \bv d \bx$ and this cannot be negative definite. In the proof of Arnold's second theorem given below, we will require the quadratic form defined by \eqref{qf} to be negative definite. We would like to restrict the perturbations $\delta \bv$ to a subspace of $X$ such that the operator $\curl (1-\alpha^{2}\Delta)$ is one to one and thus the quadratic form \eqref{qf} can be negative definite under appropriate assumptions on $C''$. We thus restrict the perturbation stream function to the following subspace,
\begin{align*}
Y_{\alpha}=\bigg\{&\phi:H^{4}(D;\mathbb R): \phi|_{(\partial D)_{0}}=0;  \int_{(\partial D)_{i}} -\nabla^{\perp}(1-\alpha^{2}\Delta) \phi \cdot d \mathbf{s} = 0, 1 \leq i \leq n ;  \\&(\bn \cdot \nabla)(\nabla^{\perp}\phi) \parallel \bn \mbox{ on } \partial D; \phi|_{(\partial D)_{i}} \mbox{ is constant}, 1 \leq i \leq n \bigg\}. \numberthis \label{st}
\end{align*}
We note that we do not specify the exact values of the constant that $\phi$ takes along the inner boundary curves. 
Also, choose for the velocity perturbations the subspace of $X$ given by 
\begin{align*}
X_{\alpha}:=\bigg\{\bv\in H^{3}(D;\mathbb R^{2}), &\div \bv =0 \mbox{ in } D, \int_{(\partial D)_{i}}(1-\alpha^{2}\Delta)\bv \cdot d\mathbf{s}=0,1 \leq i\leq n,\\& \bv \cdot \bn =0 \mbox{ on } \partial D, (\bn \cdot \nabla) \bv \parallel \bn \mbox{ on } \partial D\bigg\}.
\end{align*} 
\begin{lemma}\label{gradperpontoalpha}
The operator $-\nabla^{\perp}:Y_{\alpha} \to X_{\alpha}$ is bijective. That is, given $\bv \in X_{\alpha}$, there exists a unique $\phi \in Y_{\alpha}$ such that $\bv=-\nabla^{\perp}\phi$.
\end{lemma}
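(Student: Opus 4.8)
The plan is to verify three things in turn: that $-\nabla^{\perp}$ genuinely maps $Y_{\alpha}$ into $X_{\alpha}$, that it is injective, and that it is surjective, the last being the substantive point. The algebraic fact underlying everything is that $-\nabla^{\perp}$ and $(1-\alpha^{2}\Delta)$ commute (both have constant coefficients), so that $-\nabla^{\perp}(1-\alpha^{2}\Delta)\phi = (1-\alpha^{2}\Delta)(-\nabla^{\perp}\phi)$. This identity is exactly what makes the inner-boundary circulation conditions defining $Y_{\alpha}$ and $X_{\alpha}$ correspond to one another under the map.

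For well-definedness, given $\phi \in Y_{\alpha}$ I set $\bv = -\nabla^{\perp}\phi$. Then $\bv\in H^{3}(D;\bbR^{2})$ and $\div\bv = \partial_{x}\partial_{y}\phi - \partial_{y}\partial_{x}\phi=0$ identically. Since $\phi$ is constant on each boundary component, $\nabla\phi$ is normal to $\partial D$, hence $\bv=-\nabla^{\perp}\phi$ is tangent and $\bv\cdot\bn=0$; the remaining conditions, $(\bn\cdot\nabla)\bv\parallel\bn$ and the vanishing of the circulations of $(1-\alpha^{2}\Delta)\bv$ around the inner boundaries, are precisely the defining conditions of $Y_{\alpha}$ rewritten through the commutation identity. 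Injectivity is immediate: $-\nabla^{\perp}\phi=0$ forces $\nabla\phi\equiv 0$, so $\phi$ is constant on the connected domain $D$, and the normalization $\phi|_{(\partial D)_{0}}=0$ then gives $\phi=0$.

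The heart of the matter is surjectivity. Given $\bv=(v_{1},v_{2})\in X_{\alpha}$, I want to produce $\phi$ with $\partial_{x}\phi = -v_{2}$ and $\partial_{y}\phi = v_{1}$; equivalently, the $1$-form $\eta := -v_{2}\,dx + v_{1}\,dy$ should be exact. Because $\div\bv=0$, $\eta$ is closed, so on the multiply connected domain $D$ exactness is equivalent to the vanishing of its periods $\oint_{(\partial D)_{i}}\eta$ around each of the $n$ holes. The crucial observation is that $\oint_{(\partial D)_{i}}\eta = \oint_{(\partial D)_{i}}\bv\cdot\bn\,ds$, the flux of $\bv$ through $(\partial D)_{i}$, which vanishes since $\bv\cdot\bn=0$ pointwise on the boundary. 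Hence $\eta$ is exact, and $\phi$ exists, single-valued and determined up to an additive constant which I fix by requiring $\phi|_{(\partial D)_{0}}=0$. Regularity $\phi\in H^{4}(D;\bbR)$ follows from $\nabla\phi=(-v_{2},v_{1})\in H^{3}$.

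It then remains to check that this $\phi$ lies in $Y_{\alpha}$: tangency of $\bv$ forces $\phi$ to be constant on each $(\partial D)_{i}$, the condition $(\bn\cdot\nabla)\bv\parallel\bn$ transcribes to $(\bn\cdot\nabla)(\nabla^{\perp}\phi)\parallel\bn$, and the conditions $\int_{(\partial D)_{i}}(1-\alpha^{2}\Delta)\bv\cdot d\mathbf{s}=0$ become $\int_{(\partial D)_{i}}-\nabla^{\perp}(1-\alpha^{2}\Delta)\phi\cdot d\mathbf{s}=0$ by the commutation identity. I expect the main obstacle to be the period/flux identity establishing single-valuedness on the multiply connected domain --- in particular, fixing the orientation conventions so that the period of $\eta$ is genuinely the outward normal flux --- since everything else is a direct transcription of the boundary conditions through $\bv=-\nabla^{\perp}\phi$.
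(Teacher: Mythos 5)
Your proof is correct. Note that the paper does not actually give an argument here: it omits the proof, referring to standard vector calculus (Neu's book), and only remarks that the circulation conditions in $X_{\alpha}$ and $Y_{\alpha}$ play no role. Your write-up supplies exactly the argument being alluded to, and it is complete: the only genuinely nontrivial point is surjectivity on the multiply connected domain, and you handle it correctly by observing that the periods of the closed form $-v_{2}\,dx+v_{1}\,dy$ around the inner boundary components equal the normal fluxes $\oint_{(\partial D)_{i}}\bv\cdot\bn\,ds$, which vanish pointwise because $\bv\cdot\bn=0$ --- crucially these are fluxes, not the circulations $\oint\bv\cdot\bt\,ds$, which need not vanish. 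Your observation that $-\nabla^{\perp}$ commutes with $(1-\alpha^{2}\Delta)$, so that the circulation conditions defining $Y_{\alpha}$ and $X_{\alpha}$ are carried onto one another rather than being needed for bijectivity, is precisely consistent with the paper's remark that those conditions are not used until Lemma \ref{lapuniqalpha}. The one point worth being slightly more careful about, which you already flag, is the sign/orientation convention identifying the period with the outward flux; since the flux vanishes identically, the sign is immaterial here.
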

The proof of this Lemma is omitted as it follows from standard arguments in vector calculus, see for example \cite[pp. 166-168]{N10}. The proof does not use the conditions on the circulations in the definitions of both spaces $X_{\alpha}$ and $Y_{\alpha}$. We will need those in the proof of Lemma \ref{lapuniqalpha}.
\begin{lemma}\label{lapuniqalpha}
The operator $-\Delta(1-\alpha^{2}\Delta):Y_{\alpha}\to L_{2}$ is one to one. That is, if $-\Delta(1-\alpha^{2}\Delta) \phi =0$, for some $\phi \in Y_{\alpha}$, then, $\phi=0$.
\end{lemma}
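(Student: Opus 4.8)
The plan is to transfer the problem to the velocity field and exploit a Hodge-type orthogonality, rather than integrating the fourth-order equation against $\phi$ directly. Given $\phi\in Y_\alpha$ with $-\Delta(1-\alpha^2\Delta)\phi=0$, set $\bv=-\nabla^\perp\phi$; by Lemma \ref{gradperpontoalpha} this $\bv$ lies in $X_\alpha$. Writing $\bu=(1-\alpha^2\Delta)\bv=-\nabla^\perp(1-\alpha^2\Delta)\phi$ and using the identity $\curl(-\nabla^\perp f)=-\Delta f$ together with the fact that $\curl$ commutes with $(1-\alpha^2\Delta)$, the hypothesis becomes $\curl\bu=-\Delta(1-\alpha^2\Delta)\phi=0$ on $D$. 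Moreover $\div\bu=(1-\alpha^2\Delta)\div\bv=0$, so $\bu$ is both irrotational and solenoidal. The idea is that an irrotational field on $D$ can be written as a genuine single-valued gradient, which is then $L^2(D)$-orthogonal to the solenoidal, boundary-tangent field $\bv$; since $\int_D\bu\cdot\bv\,d\bx$ simultaneously equals $\|\bv\|_2^2+\alpha^2\|\nabla\bv\|_2^2$, this forces $\bv=0$, hence $\phi=0$.

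In detail, I would first produce a single-valued potential $g$ with $\bu=\nabla g$. Since $\curl\bu=0$ on the multiply connected $D$ and the circulation conditions in the definition of $Y_\alpha$ read $\int_{(\partial D)_i}\bu\cdot d\mathbf s=\int_{(\partial D)_i}-\nabla^\perp(1-\alpha^2\Delta)\phi\cdot d\mathbf s=0$ for $i=1,\dots,n$, all periods of $\bu$ around the inner holes vanish (the remaining period over $(\partial D)_0$ is $\int_D\curl\bu\,d\bx=0$ by Stokes' theorem), so $\bu=\nabla g$ for a single-valued $g\in H^1(D)$, harmonic because $\div\bu=0$. I would then compute, integrating by parts and using $\div\bv=0$ in $D$ and $\bv\cdot\bn=0$ on $\partial D$,
\begin{equation*}
\int_D\bu\cdot\bv\,d\bx=\int_D\nabla g\cdot\bv\,d\bx=\sum_{i=0}^n\int_{(\partial D)_i}g\,(\bv\cdot\bn)\,ds-\int_D g\,\div\bv\,d\bx=0 .
\end{equation*}
On the other hand, since $\bv\in X_\alpha$ satisfies the boundary conditions \eqref{eeabc}, the integration by parts formula \eqref{int_by_parts_alph_Eul} gives
\begin{equation*}
\int_D\bu\cdot\bv\,d\bx=\int_D(1-\alpha^2\Delta)\bv\cdot\bv\,d\bx=\int_D|\bv|^2\,d\bx+\alpha^2\int_D|\nabla\bv|^2\,d\bx .
\end{equation*}
Comparing the two displays yields $\|\bv\|_2^2+\alpha^2\|\nabla\bv\|_2^2=0$, so $\bv=-\nabla^\perp\phi\equiv0$; thus $\phi$ is constant on $D$, and $\phi|_{(\partial D)_0}=0$ forces $\phi\equiv0$.

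The main obstacle is the construction of the single-valued potential $g$: this is precisely where the multi-connectedness of $D$ enters, and where the $n$ circulation constraints in the definition of $Y_\alpha$ (equivalently $X_\alpha$) are indispensable. Without them $\bu$ would only be a locally defined gradient with possibly nonzero periods around the inner boundaries, the path integral defining $g$ would be multivalued, and the orthogonality computation would acquire boundary contributions. I would justify this step either through the de Rham theorem (the first cohomology of $D$ is spanned by the loops $(\partial D)_i$, $1\le i\le n$) or concretely by fixing a base point, setting $g(\bx)=\int\bu\cdot d\mathbf s$ along a path, and checking path-independence against the vanishing periods. The remaining points are routine: identifying the $Y_\alpha$ circulation conditions with $\int_{(\partial D)_i}\bu\cdot d\mathbf s=0$ for $\bu=(1-\alpha^2\Delta)\bv$, and verifying that the boundary terms in both integrations by parts vanish, all of which follow from $\bv\in X_\alpha$ and \eqref{eeabc}. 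It is worth noting that pairing the equation against $\phi$ directly does not close the argument, since it leaves a boundary integral of the form $\alpha^2\int_{\partial D}(\Delta\phi)\,(\bn\cdot\nabla)\phi\,ds$ that is not manifestly sign-definite (for a disk it is nonpositive); the Hodge orthogonality is exactly what circumvents this difficulty.
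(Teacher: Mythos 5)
Your proof is correct, but it takes a genuinely different route from the paper's. The paper multiplies $-\Delta(1-\alpha^{2}\Delta)\phi=0$ by $\phi$ and integrates by parts once, killing the boundary terms because $\phi$ is constant on each $(\partial D)_{i}$ (zero on the outer one) and the circulations $\int_{(\partial D)_{i}}(1-\alpha^{2}\Delta)\nabla^{\perp}\phi\cdot d\mathbf{s}$ vanish; the surviving bulk term $\int_{D}\nabla\phi\cdot(1-\alpha^{2}\Delta)\nabla\phi\,d\bx$ is then identified with $\|\bv\|_{2}^{2}+\alpha^{2}\|\nabla\bv\|_{2}^{2}$ via \eqref{int_by_parts_alph_Eul}. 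You instead pass to $\bu=(1-\alpha^{2}\Delta)\bv$, observe it is irrotational with vanishing periods, write $\bu=\nabla g$ with $g$ single-valued, and obtain $\int_{D}\bu\cdot\bv\,d\bx=0$ by Hodge orthogonality against the solenoidal, boundary-tangent $\bv$; both arguments then terminate on the same identity $\int_{D}\bu\cdot\bv\,d\bx=\|\bv\|_{2}^{2}+\alpha^{2}\|\nabla\bv\|_{2}^{2}$. Your version makes the role of the circulation constraints conceptually transparent (they are exactly the vanishing of the periods needed for $\bu$ to be an exact gradient), at the cost of invoking the de Rham/potential construction; the paper's version is more computational but uses the same constraints to cancel the boundary terms directly. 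One caveat about your closing remark: the direct pairing against $\phi$ does close the argument. The problematic boundary term $\alpha^{2}\int_{\partial D}(\Delta\phi)\,(\bn\cdot\nabla)\phi\,ds$ only appears if one integrates by parts twice at the scalar level to produce $\int_{D}(\Delta\phi)^{2}d\bx$; the paper instead converts $\int_{D}\nabla\phi\cdot\Delta\nabla\phi\,d\bx$ into $\int_{D}\bv\cdot\Delta\bv\,d\bx$ and applies \eqref{int_by_parts_alph_Eul}, whose boundary term vanishes by the condition $(\bn\cdot\nabla)\bv\parallel\bn$ together with $\bv\cdot\bn=0$, so no sign-indefinite boundary integral ever arises.
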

\begin{proof}
Note that $\phi$ satsfies,
\begin{align} %
&-\Delta(1-\alpha^{2}\Delta) \phi=0, \label{circfixuniq1alpha}\\
&\phi(x,y)|_{(\partial D)_0}=0, \label{circfixuniq2alpha}\\
&\phi|_{(\partial D)_{i}}(x,y)=c_{i}, \mbox{ for } 1 \leq i \leq n, \numberthis \label{circfixuniq3alpha}\\ 
&\int_{(\partial D)_{i}} -\nabla^{\perp}(1-\alpha^{2}\Delta) \phi(s) \cdot d \mathbf{s} = 0 \mbox{ for } 1 \leq i \leq n, \label{circfixuniq4alpha}\\
&(\bn \cdot \nabla)  \nabla^{\perp}\phi \cdot \bt =0. \label{circfixuniq5alpha}
\end{align}
Multiply  \eqref{circfixuniq1alpha} by $\phi$ and integrate over the domain to get
\begin{align*}
0&=\int_{D} \phi (-\Delta(1-\alpha^{2}\Delta) \phi) d \bx = -\sum_{i=0}^{n} \int_{(\partial D)_{i}}\phi \bn \cdot (1-\alpha^{2}\Delta) \nabla \phi ds \\&+ \int_{D} \nabla \phi \cdot (1-\alpha^{2}\Delta) \nabla \phi d \bx \\&=  \sum_{i=0}^{n} \phi|_{(\partial D)_{i}} \int_{(\partial D)_{i}}  (1-\alpha^{2}\Delta)\nabla^{\perp} \phi \cdot d \mathbf{s}  +\int_{D} \nabla \phi \cdot (1-\alpha^{2}\Delta) \nabla \phi d \bx \\&= \int_{D} \nabla \phi \cdot (1-\alpha^{2}\Delta) \nabla \phi d \bx, \numberthis \label{greensformula}
\end{align*}
where we have used Green's formula and \eqref{circfixuniq2alpha}, \eqref{circfixuniq3alpha} and \eqref{circfixuniq4alpha} and the fact that $\bn \cdot \nabla \phi = - \bt \cdot \nabla^{\perp} \phi$. 
But, 
\[\int_{D} \nabla \phi \cdot (1-\alpha^{2}\Delta) \nabla \phi  d \mathbf{x}= \int_{D} \nabla \phi \cdot \nabla \phi  d \mathbf{x} -\alpha^{2}\int_{D} \nabla \phi \cdot \Delta \nabla \phi  d \mathbf{x}.\] 
By \eqref{int_by_parts_alph_Eul}, we have that \[ \int_{D} \nabla \phi \cdot \Delta \nabla \phi  d \mathbf{x} =\int_{D} -\nabla^{\perp} \phi \cdot \Delta (-\nabla^{\perp} \phi)  d \mathbf{x} = \int_{D} \bv \cdot \Delta \bv d \mathbf{x} = -\int_{D} |\nabla \bv|^{2} d \mathbf{x}, \] 
where $\bv \in X_{\alpha}$ is the unique solution to $\bv=-\nabla^{\perp}\phi$, via Lemma \ref{gradperpontoalpha}.
Thus,  
\begin{align*} 
&\int_{D} \nabla \phi \cdot (1-\alpha^{2}\Delta) \nabla \phi  d \mathbf{x}= \int_{D} \nabla \phi \cdot \nabla \phi  d \mathbf{x} + \alpha^{2} \int_{D} |\nabla \bv|^{2}  d \mathbf{x} \\&=\int_{D} \bv \cdot \bv  d \mathbf{x} + \alpha^{2} \int_{D} |\nabla \bv|^{2}  d \mathbf{x} = 0,
\end{align*} from which we conclude that $\bv=0$.
It follows by Lemma \ref{gradperpontoalpha} that $-\nabla^{\perp} \phi =0$ on $D$ and hence $\nabla \phi=0$. Then $\phi$ is a constant, and is equal to $0$ by \eqref{circfixuniq2alpha}.
\end{proof}
\begin{lemma}\label{curl11alpha}
The operator $\curl(1-\alpha^{2}\Delta): X_{\alpha} \to L_{2}$ is one to one. That is, if $\curl(1-\alpha^{2}\Delta) \bv =0$, for some $\bv \in X_{\alpha}$, then $\bv =0$. 
\end{lemma}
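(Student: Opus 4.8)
The plan is to reduce this statement to the two preceding lemmas rather than to repeat an integration-by-parts argument. Suppose $\bv \in X_{\alpha}$ satisfies $\curl(1-\alpha^{2}\Delta)\bv = 0$. First I would apply Lemma \ref{gradperpontoalpha}: since $\bv \in X_{\alpha}$, there is a unique $\phi \in Y_{\alpha}$ with $\bv = -\nabla^{\perp}\phi$. This is the only place where the bijectivity of $-\nabla^{\perp}$ is used, and it is what guarantees that the stream function we produce actually lies in the space $Y_{\alpha}$ on which the next lemma applies.

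Next I would rewrite the vorticity hypothesis purely in terms of $\phi$. Because $(1-\alpha^{2}\Delta)$, $\curl$, and $\nabla^{\perp}$ are all constant-coefficient differential operators, they commute, and combining this with the identity $\curl(-\nabla^{\perp})\phi = -\Delta\phi$ already recorded in Section 2 gives
\begin{equation*}
\curl(1-\alpha^{2}\Delta)\bv = \curl(1-\alpha^{2}\Delta)(-\nabla^{\perp}\phi) = (1-\alpha^{2}\Delta)\curl(-\nabla^{\perp}\phi) = -\Delta(1-\alpha^{2}\Delta)\phi.
\end{equation*}
Hence the assumption $\curl(1-\alpha^{2}\Delta)\bv = 0$ is exactly $-\Delta(1-\alpha^{2}\Delta)\phi = 0$ with $\phi \in Y_{\alpha}$.

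At this point Lemma \ref{lapuniqalpha}, the injectivity of $-\Delta(1-\alpha^{2}\Delta)$ on $Y_{\alpha}$, forces $\phi = 0$, and therefore $\bv = -\nabla^{\perp}\phi = 0$, which is the claim. I do not expect a genuine obstacle here: the substantive work (handling the circulation constraints and the boundary conditions defining $Y_{\alpha}$) has already been absorbed into Lemmas \ref{gradperpontoalpha} and \ref{lapuniqalpha}. The only point requiring a moment's care is the operator manipulation above, namely that $(1-\alpha^{2}\Delta)$ commutes with $\curl$ and $\nabla^{\perp}$ and that the regularity $\phi \in H^{4}$ guaranteed by the definition of $Y_{\alpha}$ is enough to justify these fourth-order computations; both are routine, so this lemma is essentially a corollary of the two that precede it.
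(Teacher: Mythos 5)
Your proposal is correct and follows exactly the same route as the paper's own proof: invoke Lemma \ref{gradperpontoalpha} to obtain $\phi\in Y_{\alpha}$ with $\bv=-\nabla^{\perp}\phi$, rewrite the hypothesis as $-\Delta(1-\alpha^{2}\Delta)\phi=0$, and conclude via Lemma \ref{lapuniqalpha}. Your added remark about the commutation of the constant-coefficient operators just makes explicit a step the paper performs silently.
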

\begin{proof}
Let $\bv \in X_{\alpha}$ be such that $\curl(1-\alpha^{2}\Delta) \bv =0$. By Lemma \ref{gradperpontoalpha}, we have that there exists a $\phi \in Y_{\alpha}$ such that $\bv=-\nabla^{\perp}\phi$. Then $\curl(1-\alpha^{2}\Delta) \bv =\curl(1-\alpha^{2}\Delta) (-\nabla^{\perp}\phi) = -\Delta (1-\alpha^{2}\Delta) \phi =0$. By Lemma \ref{lapuniqalpha}, we have that $\phi=0$. Thus $\bv=-\nabla^{\perp}\phi=0$.
\end{proof}
\end{remark}
\begin{remark}\label{rrformalpha}
We comment on the Rayleigh-Ritz formula frequently used in hydrodynamics, see, e.g. (\cite[Lemma 4.16, page 111]{S00}). Let $A$ be a positive operator with compact resolvent acting in a Hilbert space $\mathcal{H}$. The minimal eigenvalue $\lambda_{min}(A)$ can be computed by the following Rayleigh-Ritz formula:
\begin{equation}\label{rrf1}
\lambda_{min}(A)=\min_{0 \neq \phi \in \mbox{dom } A} \frac{||A\phi||^{2}}{\langle A\phi,\phi \rangle},
\end{equation}
where $||\cdot||$ and $\langle \cdot , \cdot \rangle$ are the norm and the scalar product in $\mathcal{H}$. Indeed, as the following argument shows, \eqref{rrf1} is a consequence of the standard (Hilbert-Schmidt-Courant-Fischer) minimax principle: Let $B=A^{-1}$. By assumptions, $B$ is a compact positive operator whose maximal eigenvalue is given by the formula (see, e.g. \cite[Sec. XIII.1, page 76 onwards]{RS78})
\begin{equation}\label{rrf2}
\lambda_{max}(B)=\max_{\psi \neq 0} \frac{\langle B\psi,\psi\rangle}{||\psi||^{2}}.
\end{equation}
Letting $\psi=A\phi$ and using the spectral mapping theorem $sp(A)=(sp(B))^{-1}$,
\begin{equation*}
\lambda_{min}(A)=(\lambda_{max}(B))^{-1} = \bigg(\max_{0 \neq \phi \in \mbox{dom }A} \frac{\langle \phi, A \phi\rangle}{||A\phi||^{2}}\bigg)^{-1}=\min_{0 \neq \phi \in \mbox{dom} A} \frac{||A\phi||^{2}}{\langle A\phi,\phi \rangle},
\end{equation*}
yielding \eqref{rrf1}.
We apply formula \eqref{rrf1} for the following situation. Let $A=-\Delta(1-\alpha^{2}\Delta)$ with the domain $\mbox{dom }A=Y_{\alpha} \subset L^{2}(D)$, see \eqref{st} for the definition of $Y_{\alpha}$. We note that $Y_{\alpha} \subset L^{2}(D)$ is compactly embedded in $L^{2}(D)$ by the standard Sobolev embedding. Due to the choice of the boundary conditions, 
\begin{equation}\label{rrf5}
\langle A\phi, \phi\rangle_{L^{2}}=\langle \phi, -\Delta(1-\alpha^{2}\Delta)\phi\rangle_{L^{2}}=||\bv||_{2}^{2}+\alpha^{2}||\nabla\bv||_{2}^{2}
\end{equation}
for all $\phi \in Y_{\alpha}$ and $\bv=-\nabla^{\perp}\phi \in X_{\alpha}$. In particular, the operator $A$ is positive. On the other hand, the formula,
\begin{equation}
\curl(1-\alpha^{2}\Delta)\bv=-\Delta(1-\alpha^{2}\Delta)\phi
\end{equation}
yields
\begin{equation}\label{rrf6}
||A\phi||_{2}^{2}=||-\Delta(1-\alpha^{2}\Delta)\phi||_{2}^{2}=||\curl (1-\alpha^{2}\Delta)\bv||_{2}^{2}.
\end{equation}
Combining \eqref{rrf1}, \eqref{rrf5}, \eqref{rrf6}, we obtain the following analogue of the Rayleigh-Ritz formula for the $\alpha$-Euler equation:
\begin{equation}\label{lminalpha}
\lambda_{min,\alpha}=\min_{0 \neq \bv \in X_{\alpha}} \frac{||\curl (1-\alpha^{2}\Delta)\bv||_{2}^{2}}{||\bv||_{2}^{2}+\alpha^{2}||\nabla\bv||_{2}^{2}},
\end{equation}
where $\lambda_{min,\alpha}$ is the minimum eigenvalue of the operator $A=-\Delta(1-\alpha^{2}\Delta)$ with the domain $\mbox{dom }A=Y_{\alpha} \subset L^{2}(D)$.
\end{remark}
By equation \eqref{lminalpha}, for every $\bv \in X_{\alpha}$ we have,
\begin{equation}\label{lambdaminalphaineq}
\lambda_{min,\alpha} \bigg(\int_{D} \bv \cdot \bv  d \mathbf{x} + \alpha^{2} \int_{D} \nabla \bv \cdot \nabla \bv d \mathbf{x}\bigg) \leq \int_{D} (\curl(1-\alpha^{2}\Delta)\bv)^{2}d \bx. 
\end{equation}
We shall now prove Arnold's second stability theorem for $\alpha$-Euler.
\begin{theorem}\label{Arnold II alpha}
Let $\bv^{0}$ be a steady state solution of the $\alpha$-Euler equations \eqref{EE_alpha} on the multi connected domain $D$, satisfying Assumption \ref{steadystassumptionalpha}. Let $\lambda_{min, \alpha}>0$ be the minimum eigenvalue of the operator $-\Delta(1-\alpha^{2}\Delta):L^{2}(D) \to L^{2}(D)$ with the domain $\mbox{dom}(-\Delta(1-\alpha^{2}\Delta))=Y_{\alpha}$.
Suppose
\begin{equation}\label{c1c2euleralphaneg}
0< \frac{1}{\lambda_{min,\alpha}}< \inf\limits_{(x,y)\in D}F'(\omega^{0}(x,y)) \leq \sup\limits_{(x,y)\in D}F'(\omega^{0}(x,y)) < + \infty.
\end{equation}
There exists a constant $K>0$, such that if $\bv(\cdot,t)=\bv^{0}+\delta \bv(\cdot,t)$, $t \in I$ solves the $\alpha$-Euler equations \eqref{EE_alpha} on $D$, with $\delta \bv \in X_{\alpha}$, then one has the following estimate for all times $t \in I$,
\begin{align*}\label{esteuleralpha}
&||\bv(\cdot,t)-\bv^{0}||^{2}_{2} + \alpha^{2}||\nabla(\bv(\cdot,t)-\bv^{0})||^{2}_{2} + ||\omega(\cdot,t)-\omega^{0}||^{2}_{2} \\& \leq K (||\bv(\cdot,0)-\bv^{0}||^{2}_{2} + \alpha^{2}||\nabla(\bv(\cdot,0)-\bv^{0})||^{2}_{2}+ ||\omega(\cdot,0)-\omega^{0}||^{2}_{2}), \numberthis
\end{align*}
where $\omega^{0}=\curl (1-\alpha^{2}\Delta)\bv^{0}$ and $\omega(\cdot,t)=\curl (1-\alpha^{2}\Delta)\bv(\cdot,t)$.
\end{theorem}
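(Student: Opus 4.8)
The plan is to mirror the proof of Theorem \ref{ArnoldIalphaeuler}, but now exploit the spectral gap $\lambda_{min,\alpha}$ to compensate for the fact that the Casimir term in the expansion of $H_c$ has the \emph{wrong} sign. Concretely, choose $C$ and the constants $a_i$ exactly as in Lemma \ref{critical pt conditions}, so that $C'(\omega^0)=-F(\omega^0)$, $\bv^0$ is a critical point of $H_c$, and $H_c$ is an invariant of the motion by Lemma \ref{H_inv}. Writing $M_1=\inf_D F'(\omega^0)$ and $M_2=\sup_D F'(\omega^0)$, hypothesis \eqref{c1c2euleralphaneg} reads $\frac{1}{\lambda_{min,\alpha}}<M_1\le M_2<\infty$. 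As in the proof of Theorem \ref{ArnoldIalphaeuler}, I would first extend $F$ linearly outside the range of $\omega^0$ so that $M_1\le F'(z)\le M_2$ for every $z\in\mathbb R$; then $C''(z)=-F'(z)$ satisfies $-M_2\le C''(z)\le -M_1<-\frac{1}{\lambda_{min,\alpha}}$ for all $z$.

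Next I would reuse, verbatim, the computation leading to \eqref{Pos_def_form}: since the first variation vanishes at $\bv^0$, a Taylor expansion of the Casimir term and one integration by parts (via \eqref{int_by_parts_alph_Eul}) give
\begin{equation*}
H_c(\bv)-H_c(\bv^0)=\frac{1}{2}P+\frac{1}{2}\int_D C''(\xi)\,(\omega-\omega^0)^2\,d\bx,
\end{equation*}
where $P:=\|\bv-\bv^0\|_2^2+\alpha^2\|\nabla(\bv-\bv^0)\|_2^2$ and $\xi=\xi(\bx)$ lies between $\omega^0(\bx)$ and $\omega(\bx)$; this step is independent of the sign of $C''$. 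Setting $W:=\|\omega-\omega^0\|_2^2$ and using $-M_2\le C''(\xi)\le -M_1$ together with the Rayleigh-Ritz inequality \eqref{lambdaminalphaineq}, which applies because $\delta\bv=\bv-\bv^0\in X_\alpha$ and hence $W\ge\lambda_{min,\alpha}P$, I obtain $H_c(\bv)-H_c(\bv^0)\le\frac{1}{2}P-\frac{M_1}{2}W$ and $H_c(\bv)-H_c(\bv^0)\ge\frac{1}{2}P-\frac{M_2}{2}W$.

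The heart of the argument --- and the step that differs essentially from Theorem \ref{ArnoldIalphaeuler}, where positivity was automatic --- is to show that $-\big(H_c(\bv)-H_c(\bv^0)\big)$ is \emph{coercive} in $P+W$. For this I would split $W=(1-\theta)W+\theta W$ with $\theta\in(0,1)$, bound $(1-\theta)W\ge(1-\theta)\lambda_{min,\alpha}P$ in the upper estimate, and obtain
\begin{equation*}
H_c(\bv)-H_c(\bv^0)\le\frac{1}{2}\big(1-M_1(1-\theta)\lambda_{min,\alpha}\big)P-\frac{M_1\theta}{2}W.
\end{equation*}
Since $M_1\lambda_{min,\alpha}>1$ by \eqref{c1c2euleralphaneg}, any $\theta\in\big(0,\,1-(M_1\lambda_{min,\alpha})^{-1}\big)$ makes both coefficients strictly negative, so there is $\beta_1>0$ with $-(H_c(\bv)-H_c(\bv^0))\ge\beta_1(P+W)$; the lower estimate gives $-(H_c(\bv)-H_c(\bv^0))\le\beta_2(P+W)$ with $\beta_2=M_2/2$. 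Finally, since the circulations of $\delta\bv$ are conserved by Corollary \ref{Kelvin_boundary}, the perturbation stays in $X_\alpha$ for all $t\in I$, so the two-sided bound holds at every time; invariance of $H_c$ (Lemma \ref{H_inv}) then yields $P(t)+W(t)\le\beta_1^{-1}\big(-(H_c(\bv(t))-H_c(\bv^0))\big)=\beta_1^{-1}\big(-(H_c(\bv(0))-H_c(\bv^0))\big)\le\beta_2\beta_1^{-1}(P(0)+W(0))$, which is the claimed estimate with $K=\beta_2/\beta_1$. The main obstacle is precisely the sign issue: the quadratic form is now negative (rather than positive) definite, and closing the estimate hinges on the strict gap $M_1>1/\lambda_{min,\alpha}$ and the restriction $\delta\bv\in X_\alpha$, which is what legitimizes the use of \eqref{lambdaminalphaineq}.
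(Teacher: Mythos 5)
Your proposal is correct and follows essentially the same route as the paper's proof: Taylor-expand $H_c$ about the critical point $\bv^0$ to get \eqref{Pos_def_form2}, bound $C''$ between $-M_2$ and $-M_1$, and use the Rayleigh--Ritz inequality \eqref{lambdaminalphaineq} together with the strict gap $M_1>1/\lambda_{min,\alpha}$ to recover coercivity in both $\|\bv-\bv^0\|_2^2+\alpha^2\|\nabla(\bv-\bv^0)\|_2^2$ and $\|\omega-\omega^0\|_2^2$; your $\theta$-splitting of $W$ is just a parametrized version of the paper's half-and-half split. Your explicit remark that conservation of circulations (Corollary \ref{Kelvin_boundary}) keeps $\delta\bv(\cdot,t)$ in $X_\alpha$ for all $t$ is a small point the paper leaves implicit, but it does not change the argument.
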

\begin{proof}
Let $K_{1}:= \inf_{(x,y) \in D} F'(\omega^{0}(x,y))$ and $K_{2}:= \sup_{(x,y) \in D} F'(\omega^{0}(x,y))$. From the fact that $C''(\omega^{0}(x,y))=-F'(\omega^{0}(x,y))$ for every $(x,y) \in D$, we see that 
\[ 0<K_{1} \leq -C''(\omega^{0}(x,y)) \leq K_{2}<+\infty,\]
for every $(x,y) \in D$. We first extend $C$ to all of $\mathbb R$ such that 
\begin{equation}\label{C''}
K_{1} \leq -C''(\xi) \leq K_{2},
\end{equation}
holds for every $\xi \in \mathbb R$.
Proceeding similarly to the proof of Arnold's first theorem, we obtain that, cf. \eqref{Pos_def_form}, 
\begin{align}\label{Pos_def_form2}
& H_{c}(\bv)-H_{c}(\bv^{0})= \frac{1}{2}\int_{D}   (\bv-\bv^{0}) \cdot (\bv-\bv^{0}) d \bx  \\
&+ \frac{1}{2}\int_{D} \alpha^{2}  |\nabla  (\bv-\bv^{0})|^{2} d \bx +\frac{1}{2}\int_{D} C''(\xi)(\omega-\omega^{0})^{2} d \bx. \nonumber
\end{align}
Since \eqref{C''} holds, we have that, 
\begin{align*}
& -\frac{1}{2}\int_{D} |\bv-\bv^{0}|^{2}d \bx - \frac{\alpha^{2}}{2}\int_{D} |\nabla(\bv-\bv^{0})|^{2}d \bx+\frac{K_{1}}{2}\int_{D}(\omega-\omega^{0})^{2}d \bx \\&\leq H_{c}(\bv^{0})-H_{c}(\bv) \leq - \frac{1}{2}\int_{D} |\bv-\bv^{0}|^{2}d \bx - \frac{\alpha^{2}}{2}\int_{D} |\nabla(\bv-\bv^{0})|^{2}d \bx \\&+\frac{K_{2}}{2}\int_{D}(\omega-\omega^{0})^{2}d \bx. \numberthis \label{Hvineq}
\end{align*}
By \eqref{lambdaminalphaineq}, we have that,
\begin{align*}%
\int_{D} (\bv-\bv^{0}) \cdot (\bv-\bv^{0}) d \bx + \alpha^{2} \int_{D} |\nabla (\bv-\bv^{0})|^{2} d \bx  \leq \frac{1}{\lambda_{min,\alpha}} \int_{D}  (\omega -  \omega^{0}))^{2}  d \bx, \numberthis \label{Poincare_like_ineq_2}
\end{align*} 
This then means that the left hand side of \eqref{Hvineq} can be estimated from below by
\begin{align*} 
&0 < \frac{(K_{1}- 1/\lambda_{min,\alpha})}{2}\int_{D}((\omega-\omega^{0})^{2}d \bx \leq -\frac{1}{2}\int_{D} |\bv-\bv^{0}|^{2}d \bx \\&- \frac{\alpha^{2}}{2}\int_{D} |\nabla(\bv-\bv^{0})|^{2}d \bx+\frac{K_{1}}{2}\int_{D}(\omega-\omega^{0})^{2}d \bx \leq H_{c}(\bv^{0})-H_{c}(\bv).
\end{align*}
Thus, obviously, splitting the LHS, we obtain
\begin{align*}
&\frac{1}{4}(K_{1}- 1/\lambda_{min,\alpha})\int_{D}((\omega-\omega^{0})^{2}d \bx) \\&+ \frac{1}{4}(K_{1}- 1/\lambda_{min,\alpha})\int_{D}((\omega-\omega^{0})^{2}d \bx \leq H_{c}(\bv^{0})-H_{c}(\bv).
\end{align*}
Using \eqref{lambdaminalphaineq} again, we see that 
\begin{align*}
&\frac{\lambda_{min,\alpha}(K_{1}-1/\lambda_{min,\alpha})}{4} \bigg(\int_{D}|\bv-\bv^{0}|^{2}d \bx + \alpha^{2}\int_{D} |\nabla(\bv-\bv^{0})|^{2}d \bx \bigg)\\& + \frac{1}{4}(K_{1}- 1/\lambda_{min,\alpha})\int_{D}((\omega-\omega^{0})d \bx) \leq H_{c}(\bv^{0})-H_{c}(\bv). \numberthis \label{beta1}
\end{align*}
On the other hand the right hand side of \eqref{Hvineq} can be estimated as follows:
\begin{align*}
&H_{c}(\bv^{0})-H_{c}(\bv) \leq \frac{-1}{2}\int_{D} |\bv-\bv^{0}|^{2}d \bx - \frac{\alpha^{2}}{2}\int_{D} |\nabla(\bv-\bv^{0})|^{2}d \bx \\&+K_{2}\int_{D}(\omega-\omega^{0})d \bx \leq \frac{1}{2}\int_{D} |\bv-\bv^{0}|^{2}d \bx \\&+ \frac{\alpha^{2}}{2}\int_{D} |\nabla(\bv-\bv^{0})|^{2}d \bx +K_{2}\int_{D}(\omega-\omega^{0})d \bx. \numberthis \label{beta2}
\end{align*}
Now let $\beta_{1}=\min \{\frac{\lambda_{min,\alpha}(K_{1}-1/\lambda_{min,\alpha})}{4}, \frac{1}{4}(K_{1}- 1/\lambda_{min,\alpha})\}$ and $\beta_{2}= \max \{ \frac{1}{2},K_{2}\}$ and we obtain, cf. \eqref{Hamiltonian_bound},
\begin{align*}
&\beta_{1}(\| \bv -\bv^{0}\|_{2}^{2} + \alpha^{2}\| \nabla (\bv -\bv^{0})\|_{2}^{2}+\| \omega -\omega^{0}\|_{2}^{2}) \leq H_{c}(\bv^{0})-H_{c}(\bv)\\ 
&\leq  \beta_{2}(\| \bv -\bv^{0}\|_{2}^{2} + \alpha^{2}\| \nabla (\bv -\bv^{0})\|_{2}^{2} + \| \omega -\omega^{0}\|_{2}^{2}).
\end{align*}
We can thus finish the proof as in Arnold's first theorem.
\end{proof}
\subsection{Arnold's theorems in a bounded, simply connected domain}
Let $D \subset \mathbb R^{2}$  be a bounded, simply connected region with a smooth boundary $\partial D$. The functional \eqref{hc} is now given by,
\beq\label{hcalphaeulersimplyconnected}
H_{c}(\bv)= \frac{1}{2}\int_{D} \bv \cdot (1-\alpha^{2}\Delta)\bv d \bx + \int_{D} C(\curl(1-\alpha^{2}\Delta) \bv) d \bx +  a \int_{\partial D}(1-\alpha^{2}\Delta)\bv \cdot \mathbf{ds}. 
\enq 
Lemma \ref{H_inv} holds and we impose Assumption \ref{steadystassumptionalpha}. Lemma \ref{critical pt conditions} also holds, where we now set $a= F(\omega^{0})|_{\partial D}$. The expression for the second variation given in \eqref{H_second_var} remains unchanged. Theorem \ref{ArnoldIalphaeuler} also holds in this case. We now expand upon Remark \ref{perturb_alpha}.
\begin{remark}\label{perturbsimplyconnectedalpha}
Our Lemmas \ref{gradperpontoalpha}, \ref{lapuniqalpha} and \ref{curl11alpha} in Remark \ref{perturb_alpha} will work where the subspace for the stream function perturbations is now $Y_{\alpha}:=\{\phi \in H^{4}(D;\mathbb R) \cap H^{1}_{0}(D;\mathbb R); (\bn \cdot \nabla) (\nabla \phi)\cdot \bn = 0 \mbox{ on } \partial D\}$ and the subspace for the velocity perturbations is 
\begin{align*}
X_{\alpha}:=\bigg\{\bv\in H^{3}(D;\mathbb R^{2}), \div \bv =0 \mbox{ in } D,  \bv \cdot \bn =0 \mbox{ and }  (\bn \cdot \nabla) \bv \parallel  \bn \mbox{ on } \partial D \bigg\}.
\end{align*}
The proofs are similar and are omitted. Arnold's second theorem then follows as stated. 
\end{remark}
\subsection{Arnold's second theorem on the two torus}\label{twotorus}
By Remark \ref{boundaryalpha} below, we do not expect that Arnold's first theorem holds on the two torus.
\begin{remark}\label{boundaryalpha}
\sloppy
Condition \eqref{c1c2euleralpha} in Arnold's first theorem \ref{ArnoldIalphaeuler} is never satisfied in a domain without a boundary , see \cite[Section 3.2, page 112]{MP94}. 
To demonstrate this, let us assume \eqref{c1c2euleralpha}. Then, since $F$ is monotone, there exists its inverse function denoted by $G$, i.e., $G=F^{-1}$, and since \eqref{c1c2euleralpha} holds, %
one has the relationship 
\begin{equation}\label{gineqalpha}
\frac{1}{c_{2}} \leq - G'(\xi) \leq \frac{1}{c_{1}},
\end{equation}
 for all $\xi$ in the range of $F(\omega^{0}(\cdot,\cdot))$, i.e., for all $\xi$ in the range of $\phi^{0}(\cdot,\cdot)$. In particular, $G'$ is negative everywhere. Assume without loss of generality that $\partial_{x}\phi^{0} \neq 0$ (if it is, then in the argument below replace $\partial_{x}\phi^{0}$ by $\partial_{y}\phi^{0}$, we exclude the trivial case $\phi^{0}=$ constant everywhere in $D$). We have that $-\Delta (1-\alpha^{2}\Delta)\phi^{0}=\omega^{0}=G(\phi^{0})$. From this we see that, $\partial_{x}(-\Delta(1-\alpha^{2}\Delta) \phi^{0})=G'(\phi^{0})\partial_{x}\phi^{0}$. Multiplying this by $\partial_{x}\phi^{0}$ and integrating this over the domain $D$, we get, 
\begin{equation}\label{g}
 -\int_{D} \partial_{x}\phi^{0} \partial_{x} \Delta (1-\alpha^{2}\Delta) \phi^{0} d \bx = \int_{D} G'(\phi^{0})(\partial_{x} \phi^{0})^{2} d \bx.
\end{equation}
Integrating the left hand side by parts, we get, 
\begin{align*}
&-\int_{D} \partial_{x}\phi^{0} \partial_{x} \Delta (1-\alpha^{2}\Delta) \phi^{0} d \bx = -\int_{D} \partial_{x}\phi^{0}  \Delta (1-\alpha^{2}\Delta) \partial_{x}\phi^{0} d \bx \\
&=\int_{D} \nabla(\partial_{x}\phi^{0})\cdot \nabla ((1-\alpha^{2}\Delta) \partial_{x} \phi^{0}) d \bx - \int_{\partial D} (\partial_{x}\phi^{0}) \bn \cdot \nabla (\partial_{x} (1-\alpha^{2}\Delta)\phi^{0}) ds.
\end{align*}
But 
\begin{align*}
&\int_{D} \nabla(\partial_{x}\phi^{0})\cdot \nabla ((1-\alpha^{2}\Delta) \partial_{x} \phi^{0}) d \bx = \int_{D} (\nabla \partial_{x}\phi^{0})^{2} d \bx -\\& \alpha^{2} \int_{D} \nabla (\partial_{x} \phi^{0}) \cdot \nabla (\Delta \partial_{x} \phi^{0}) d \bx = \int_{D} (\nabla \partial_{x}\phi^{0})^{2} d \bx \\&+ \alpha^{2} \int_{D} |\nabla(\nabla \partial_{x} \phi^{0})|^{2}d \bx  -\int_{\partial D} \nabla (\partial_{x}\phi^{0}) \bn \cdot \nabla (\nabla \partial_{x} \phi^{0}) ds.
\end{align*}
Thus, rewriting the left side of \eqref{g} one obtains
\begin{align*}
&\int_{D} (\nabla \partial_{x}\phi^{0})^{2} d \bx + \alpha^{2} \int_{D} |\nabla(\nabla \partial_{x} \phi^{0})|^{2}d \bx  -\int_{\partial D} \nabla (\partial_{x}\phi^{0}) \bn \cdot \nabla (\nabla \partial_{x} \phi^{0}) ds \\ & -\int_{\partial D} (\partial_{x}\phi^{0}) \bn \cdot \nabla (\partial_{x} (1-\alpha^{2}\Delta)\phi^{0}) ds =\int_{D} G'(\phi^{0})(\partial_{x} \phi^{0})^{2} d \bx .
\end{align*}
Note that the first two terms on the left hand side are positive and the term in the right hand side is negative by \eqref{gineqalpha} which leads to a contradiction in the absence of the boundary terms in the left hand side. 
\end{remark}
We consider Arnold's second theorem on the two torus $\mathbb T^{2}$. %
The Hamiltonian $H_{c}$ is now given by,
\beq\label{Hceuleralphatorus}
H_{c}(\bv)= \frac{1}{2}\int_{D} \bv \cdot (1-\alpha^{2})\bv d \bx + \int_{D} C(\curl(1-\alpha^{2}\Delta) \bv) d \bx. 
\enq 
Lemma \ref{H_inv} remains true in this setting. We also impose Assumption \ref{steadystassumptionalpha}. Lemma \ref{critical pt conditions} is modified as follows.
\begin{lemma}\label{critical pt conditions torus}
Let $\bv^{0}$, $\omega^{0}$ be a steady state solution of \eqref{EE_alpha}, satisfying Assumption \ref{steadystassumptionalpha}, where $\omega^{0}=\curl(1-\alpha^{2}\Delta)\bv^{0}$. 
 Let $C$ be a smooth function  so that 
\begin{equation}\label{Cg}
C'(\omega^{0}(x,y))=-F(\omega^{0}(x,y)),
\end{equation} for every $(x,y) \in D$.  
Then $\delta H_{c}(\bv^{0})\delta \bv=0$, i.e $\bv^{0}$ is a critical point of $H_{c}$. 
\end{lemma}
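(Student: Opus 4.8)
The plan is to repeat the argument of Lemma~\ref{critical pt conditions} almost verbatim, the crucial simplification being that on the closed manifold $\mathbb{T}^{2}$ there is no boundary: every boundary integral and the entire circulation sum in \eqref{hc} disappear, so there is nothing analogous to the constants $a_{i}$ to fix. First I would compute the first variation of the reduced functional \eqref{Hceuleralphatorus}. Writing $\delta\bu=(1-\alpha^{2}\Delta)\delta\bv$ and $\delta\omega=\curl\delta\bu$, it reads
\begin{equation*}
\delta H_{c}(\bv^{0})\delta\bv=\frac{1}{2}\int_{D}\bv^{0}\cdot\delta\bu\,d\bx+\frac{1}{2}\int_{D}\bu^{0}\cdot\delta\bv\,d\bx+\int_{D}C'(\omega^{0})\delta\omega\,d\bx.
\end{equation*}

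Next I would symmetrize the two quadratic terms. On $\mathbb{T}^{2}$ the identity \eqref{v1v2} holds with no boundary contribution, so that $\int_{D}\bu^{0}\cdot\delta\bv\,d\bx=\int_{D}\bv^{0}\cdot\delta\bu\,d\bx$ exactly as in \eqref{C4}; the first two integrals thus collapse to $\int_{D}\bv^{0}\cdot\delta\bu\,d\bx$. For the Casimir term I would invoke the pointwise identity \eqref{C1},
\begin{equation*}
C'(\omega^{0})\delta\omega=\curl\bigl(C'(\omega^{0})\delta\bu\bigr)-C''(\omega^{0})\nabla^{\perp}\omega^{0}\cdot\delta\bu.
\end{equation*}
Here is the one genuinely torus-specific step: since $\mathbb{T}^{2}$ has empty boundary, Stokes' theorem forces $\int_{D}\curl\bigl(C'(\omega^{0})\delta\bu\bigr)\,d\bx=0$, so the first term integrates to zero with no surviving circulation contributions. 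Consequently $\int_{D}C'(\omega^{0})\delta\omega\,d\bx=-\int_{D}C''(\omega^{0})\nabla^{\perp}\omega^{0}\cdot\delta\bu\,d\bx$, and collecting terms gives
\begin{equation*}
\delta H_{c}(\bv^{0})\delta\bv=\int_{D}\bigl(\bv^{0}-C''(\omega^{0})\nabla^{\perp}\omega^{0}\bigr)\cdot\delta\bu\,d\bx.
\end{equation*}

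It remains to show the integrand vanishes identically. By Assumption~\ref{steadystassumptionalpha} one has $\bv^{0}=-\nabla^{\perp}\phi^{0}=-F'(\omega^{0})\nabla^{\perp}\omega^{0}$, while differentiating the defining relation $C'(\omega^{0})=-F(\omega^{0})$ imposed in the hypothesis yields $C''(\omega^{0})=-F'(\omega^{0})$; hence $C''(\omega^{0})\nabla^{\perp}\omega^{0}=-F'(\omega^{0})\nabla^{\perp}\omega^{0}=\bv^{0}$, and the integrand is zero. This proves $\delta H_{c}(\bv^{0})\delta\bv=0$. I do not anticipate any real obstacle here, since the torus version is strictly simpler than the multiply connected case of Lemma~\ref{critical pt conditions}; the only point requiring care is the justification that the Stokes' integral of a curl over the closed torus vanishes, which is precisely what eliminates the circulation terms that had to be balanced against the constants $a_{i}$ in the bounded-domain argument.
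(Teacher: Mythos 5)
Your proposal is correct and is essentially the paper's own argument: the paper proves the torus version by running the computation of Lemma \ref{critical pt conditions} with the boundary/circulation terms absent, which is exactly what you do — symmetrizing via \eqref{v1v2}, killing the $\int_{\mathbb{T}^{2}}\curl(C'(\omega^{0})\delta\bu)\,d\bx$ term by periodicity, and closing with $C''(\omega^{0})\nabla^{\perp}\omega^{0}=-F'(\omega^{0})\nabla^{\perp}\omega^{0}=\bv^{0}$. No gaps.
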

The expression for the second variation remains the same as \eqref{H_second_var}. The statement of Arnold's first theorem remains the same as in Theorem \ref{ArnoldIalphaeuler}. Remark \ref{perturb_alpha} is modified as follows.
\begin{remark}\label{perturbalphatwotorus}
Our space for the perturbation stream function is now given by 
\begin{equation}\label{yalphatorus}
Y_{\alpha}:=\{\phi \in H^{4}(\mathbb T^{2}); \int_{\mathbb T^{2}}\phi d \bx = 0\},
\end{equation}
and for the velocities is given by $X_{\alpha}:=\{\bv \in H^{3}(\mathbb T^{2}; \mathbb R^{2}); \int_{\mathbb T^{2}} \bv d \bx= 0; \div \bv =0\}$. Lemmas \ref{gradperpontoalpha}, \ref{lapuniqalpha} and \ref{curl11alpha} in Remark \ref{perturb_alpha} are true in this setting with minor modifications in the proof. 
Arnold's second theorem then follows as stated in Theorem \ref{Arnold II alpha}.
\end{remark}
\subsection{Arnold's theorems on the periodic channel}
Now, we would like to formulate Arnold's theorems on the periodic channel $D=\mathbb T \times [-1,1]$, so that the boundary conditions are periodic in the $x$ direction  with boundary conditions $\bv \cdot \bn$ and $(\bn \cdot \nabla) \bv $ parallel to $\bn$ at the ``walls'' $y=1$ and $y=-1$.
We will prove that since the domain is translationally invariant in the $x$ direction, the $x$ momentum is conserved, i.e., we will prove that, if $\bv(t,\cdot)=(v_{1}(t,\cdot),v_{2}(t,\cdot))$, $\bu(t,\cdot)=(u_{1}(t,\cdot),u_{2}(t,\cdot))$ solve the  $\alpha$-Euler equation \eqref{EE_alpha}, then 
\begin{equation}\label{momentumalpha}
M_{x}=\int_{-1}^{1}\int_{\mathbb T} u_{1}(t,x,y)dx dy = \int_{-1}^{1}\int_{\mathbb T} u_{1}(0,x,y)dx dy
\end{equation}
is an invariant of the motion. Here $\bu=(1-\alpha^{2}\Delta)\bv$.
\begin{lemma}\label{minvalpha}
Suppose $\bv(t,\cdot)=(v_{1}(t,\cdot),v_{2}(t,\cdot))$, $\bu(t,\cdot)=(u_{1}(t,\cdot),u_{2}(t,\cdot))$ solve the  $\alpha$-Euler equation \eqref{EE_alpha}, on the domain $\mathbb T \times [-1,1]$. Then 
\begin{equation}\label{minveq}
\frac{d}{dt} M_{x}=\frac{d}{dt} \int_{-1}^{1}\int_{\mathbb T} u_{1}(t,x,y)dx dy=0.
\end{equation}
\end{lemma}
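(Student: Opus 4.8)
The plan is to differentiate under the integral sign and show that the spatial integral of $\partial_t u_1$ vanishes. Writing out the first component of the velocity form \eqref{alph_Eul_vel} of the $\alpha$-Euler equations gives
\[
\partial_t u_1 + \bv\cdot\nabla u_1 + (\partial_x v_1)u_1 + (\partial_x v_2)u_2 + \partial_x\pi = 0,
\]
where I have used that the first component of the stretching term is $\big[(\nabla\bv)^{\top}\bu\big]_1 = (\partial_x v_1)u_1 + (\partial_x v_2)u_2$. So I would integrate this identity over $D = \mathbb{T}\times[-1,1]$ and argue that each of the three terms other than $\partial_t u_1$ integrates to zero.

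The pressure term is immediate: $\int_{-1}^{1}\int_{\mathbb{T}}\partial_x\pi\,dx\,dy = 0$ by periodicity in $x$. For the advection term I would use $\div\bv = 0$ to write $\bv\cdot\nabla u_1 = \div(u_1\bv)$, apply the Divergence Theorem, and note that the only boundary is at the walls $y=\pm1$, where $\bn=(0,\pm1)$ and $\bv\cdot\bn = \pm v_2 = 0$ by the no-normal-flow condition \eqref{eeabc}; the contributions from the $x$-direction cancel by periodicity. Hence $\int_D \bv\cdot\nabla u_1\,d\bx = 0$.

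The remaining, and most delicate, piece is the stretching term $\int_D (\partial_x v_j)u_j\,d\bx$ (summed over $j=1,2$). Here I would substitute $\bu=(1-\alpha^{2}\Delta)\bv$ and split it as $\int_D(\partial_x v_j)v_j\,d\bx - \alpha^{2}\int_D(\partial_x v_j)\Delta v_j\,d\bx$. The first integral equals $\int_D\tfrac12\partial_x(v_j^{2})\,d\bx = 0$ by $x$-periodicity. For the second, integrating by parts via \eqref{greensidentity} yields a bulk term $\int_D\tfrac12\partial_x|\nabla v_j|^{2}\,d\bx = 0$ (again periodicity) plus a boundary term $\pm\int_{y=\pm1}(\partial_x v_j)(\partial_y v_j)\,ds$. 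This is exactly where both boundary conditions in \eqref{eeabc} are needed: for $j=1$ the factor $\partial_y v_1$ vanishes on the walls because $(\bn\cdot\nabla)\bv\parallel\bn$ forces the tangential ($x$-) component of $\partial_y\bv$ to be zero, while for $j=2$ the tangential derivative $\partial_x v_2$ vanishes since $v_2\equiv 0$ along each wall. Thus both boundary contributions vanish and the stretching term integrates to zero.

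The main obstacle is precisely this stretching term: unlike the Euler case it does not immediately reduce to a divergence of a single field, and one must exploit the filter relation $\bu=(1-\alpha^{2}\Delta)\bv$ together with a careful simultaneous use of both conditions in \eqref{eeabc} to annihilate the $\alpha^{2}$ boundary terms. Combining the three vanishing contributions gives $\frac{d}{dt}M_x = \int_D\partial_t u_1\,d\bx = 0$, as claimed.
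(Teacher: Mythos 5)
Your proof is correct, and it reaches the same conclusion by a slightly different organization of the nonlinearity. The paper starts from the rotational form \eqref{alphaeulernew}, $\partial_t\bu-\bv\times(\nabla\times\bu)+\nabla f=0$, so the only terms it must kill besides the exact $x$-gradient $\partial_x f$ are $v_2\partial_x u_2$ and $v_2\partial_y u_1$; the latter is converted via $\div\bv=0$ into $\int(\partial_x v_1)u_1$, which is then handled exactly as in your treatment of the $j=1$ stretching term. You instead work directly with \eqref{alph_Eul_vel}, splitting into advection, stretching and pressure; the two decompositions are related by the identity $(\bv\cdot\nabla)\bu+(\nabla\bv)^{\top}\bu=-\bv\times(\nabla\times\bu)+\nabla(\bu\cdot\bv)$ that the paper quotes elsewhere, with the term $\nabla(\bu\cdot\bv)$ migrating between your pressure-type contribution and your advection term. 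Both routes ultimately use the same three ingredients: $x$-periodicity to kill exact $x$-derivatives, $v_2=0$ at the walls, and $\partial_y v_1=0$ at the walls (the paper records the latter as $\phi_{yy}=0$ in \eqref{bc2alpha} and uses it tacitly in its final integration by parts, whereas you invoke it explicitly). If anything, your version is slightly more transparent about exactly where the second boundary condition in \eqref{eeabc} enters, since you isolate the boundary terms $\pm\int_{y=\pm1}(\partial_x v_j)(\partial_y v_j)\,ds$ for $j=1,2$ and annihilate each one by the appropriate condition; the paper's version buys a shorter computation because the gradient part of the nonlinearity is absorbed into $f$ from the outset.
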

\begin{proof}
We first note that the boundary conditions $\bv \cdot \bn|_{y=\pm1} =0$ imply that 
\beq\label{v2bcalpha}
v_{2}(x,-1)=v_{2}(x,1)=0.
\enq
Also, the boundary condition $\bn \cdot \nabla \bv$ parallel to $\bn$ implies that $\bn \cdot \nabla \bv \cdot \bt=0$. On the boundaries $y=-1$ and $y=1$, $\bn=(0,\pm1)$ and thus $\bn \cdot \nabla = \partial_{y}$. 

Thus $\bn \cdot \nabla \bv = \partial_{y} \bv =(\phi_{yy},-\phi_{yx})$. Thus $(\phi_{yy},-\phi_{yx}) \cdot (1,0)=0$ implies that $\phi_{yy}=0$ on the boundary, i.e., 
\begin{equation}\label{bc2alpha}
\phi_{yy}(x,-1)=\phi_{yy}(x,1)=0.
\end{equation}
The $\alpha$-Euler equations \eqref{EE_alpha} can be rewritten as, see \cite[Eq 8.33, page 67]{HMR98},
\begin{equation}\label{alphaeulernew}
\partial_{t}\bu - \bv \times (\nabla \times \bu)+ \nabla (\bv \cdot \bu - \frac{1}{2}|\bv|^{2}-\frac{\alpha^{2}}{2}|\nabla \bv|^{2}+p)=0.
\end{equation}
Denote $f=\bv \cdot \bu - \frac{1}{2}|\bv|^{2}-\frac{\alpha^{2}}{2}|\nabla \bv|^{2}+p$. Also note that $\nabla \times \bu = (\partial_{x}u_{2}-\partial_{y}u_{1})\mathbf k$, where $\mathbf k$ is the unit vector pointing out of the plane of flow. Thus $\bv \times (\nabla \times \bu)=(v_{2}\partial_{x}u_{2}-v_{2}\partial_{y}u_{1})\mathbf i - (v_{1}\partial_{x}u_{2}-v_{1}\partial_{y}u_{1})\mathbf j$.
We thus have that, using \eqref{alphaeulernew} and the computations above,
\begin{align*}
&\frac{d}{dt} M_{x}=\frac{d}{dt} \int_{-1}^{1}\int_{\mathbb T} u_{1}(t,x,y)dx dy = \int_{-1}^{1}\int_{\mathbb T} \partial_{t}u_{1}(t,x,y)dx dy\\
&=\int_{-1}^{1}\int_{\mathbb T} \big( v_{2}\partial_{x}u_{2}-v_{2}\partial_{y}u_{1}- \partial_{x}f \big) dx dy.
\end{align*}
We analyze this term by term. Notice first that $\int_{\mathbb T}- \partial_{x}f dx =0$.  
Rewriting $v_{2}\partial_{x}v_{2}=\partial_{x}(\frac{1}{2}v_{2})^{2}$ we get $\int_{\mathbb T} v_{2}\partial_{x}v_{2} dx = \int_{\mathbb T} \partial_{x}(\frac{1}{2}v_{2})^{2} dx=0$. Also,
\begin{align*}
&\int_{-1}^{1}\int_{\mathbb T} v_{2}\partial_{x}\Delta v_{2} dx dy = \int_{\mathbb T} \int_{-1}^{1} v_{2} \Delta (\partial_{x} v_{2}) dy dx \\&= -\int_{\mathbb T} \int_{-1}^{1} \nabla v_{2} \cdot \nabla (\partial_{x} v_{2}) dy dx=-\int_{-1}^{1} \int_{\mathbb T} \partial_{x} (\frac{1}{2}(\nabla v_{2})^{2}) dx dy = 0.
\end{align*}
where we integrate by parts in $y$ and boundary terms disappear by using boundary condition \eqref{v2bcalpha} and then switch order of integration and use the fact that \[\nabla v_{2} \cdot \nabla (\partial_{x} v_{2})=(\frac{1}{2}(\nabla v_{2})^{2}).\]
Thus, $\int_{-1}^{1}\int_{\mathbb T} v_{2}\partial_{x}u_{2} dx dy = 0$. We now look at the second term, 
\begin{align*}
&\int_{-1}^{1}\int_{\mathbb T} v_{2}\partial_{y}u_{1} dx dy = \int_{\mathbb T}\int_{-1}^{1} v_{2}\partial_{y}u_{1} dy dx \\&= - \int_{\mathbb T}\int_{-1}^{1} \partial_{y}v_{2} u_{1} dy dx =  \int_{\mathbb T}\int_{-1}^{1} \partial_{x}v_{1} u_{1} dy dx,
\end{align*}
where we integrate by parts and the boundary terms vanish using boundary condition \eqref{v2bcalpha} and since $\div \bv=0$, we have that $\partial_{x}v_{1}=-\partial_{y}v_{2}$. Notice that \[ \partial_{x}v_{1} u_{1}= \partial_{x}v_{1} v_{1} - \alpha^{2}\partial_{x}v_{1}\Delta v_{1}.\]
Since $\partial_{x}v_{1} v_{1}=\partial_{x}(\frac{1}{2}v_{1}^{2})$, we have that 
\begin{align*} 
&\int_{-1}^{1} \int_{\mathbb T} \partial_{x}v_{1}\Delta v_{1} dx dy = - \int_{-1}^{1} \int_{\mathbb T} \nabla \partial_{x}v_{1} \cdot \nabla v_{1} dx dy
\\&= - \int_{-1}^{1} \int_{\mathbb T} \partial_{x}(\frac{1}{2}(\nabla v_{1})^{2})dxdy=0.
\end{align*}
\end{proof}
Since $u_{1}=\psi_{y}$, 
using \eqref{momentumalpha}, we see that,
\begin{align*}\label{}
& \frac{1}{2\pi} M_{x}=\frac{1}{2\pi}\int_{-1}^{1}\int_{\mathbb T} u_{1}(t,x,y)dx dy = \frac{1}{2\pi}\int_{\mathbb T} \int_{-1}^{1} u_{1}(t,x,y)dy dx\\&=\frac{1}{2\pi}\int_{\mathbb T} \int_{-1}^{1} \psi_y(t,x,y)dy dx=\frac{1}{2\pi} \int_{\mathbb T} \psi(t,x,-1)dx-\frac{1}{2\pi} \int_{\mathbb T} \psi(t,x,1)dx.
\end{align*}
Since $\psi=\phi-\alpha^{2}\Delta \phi$, and by the boundary condition \eqref{bc2alpha}, $\phi_{yy}(x,\pm1)=0$, we have that
\[ \psi(x,\pm1)=\phi(x,\pm1)-\partial_{xx}\phi_{xx}(x,\pm1).\]
Also note that, \[\int_{\mathbb T} \phi_{xx}(x,\pm1) dx = \int_{\mathbb T} \partial_{x} \phi_{x}(x,\pm1) dx=0\] to conclude that
\begin{align*}
&\frac{1}{2\pi} M_{x} = \frac{1}{2\pi} \int_{\mathbb T} \psi(t,x,-1)dx-\frac{1}{2\pi} \int_{\mathbb T} \psi(t,x,1)dx \\&= \frac{1}{2\pi} \int_{\mathbb T} \phi(t,x,-1)dx-\frac{1}{2\pi} \int_{\mathbb T} \phi(t,x,1)dx. \numberthis \label{psiphibc}
\end{align*} 
By Lemma \ref{minvalpha}, $M_{x}/2 \pi$ is $t$-independent. Since $\phi(x,-1)$ and $\phi(x,1)$ are constants, one can simply take the difference to be $M_{x}/ 2 \pi$. Thus in solving the Poisson equation for the stream function we can set $\phi(x,-1)=0$ and $\phi(x,1)=M_{x}/2 \pi$.
Thus the following Poisson problem is solved to recover the stream function from the vorticity
\begin{align*}\label{dirichletEuleralpha}
-\Delta (1-\alpha^{2}\Delta) \phi=\omega, \mbox{ in } D, \quad
\phi(x,-1)=0,  \phi(x,1)=-M_{x}/ 2 \pi. \numberthis
\end{align*}
Since this must hold for both the steady state $\phi^{0}$ and the perturbed flow $\phi^{0}+\delta \phi$, we see that, the Poisson equation satisfied by the perturbation stream function $\delta \phi$ satisfies Dirichlet boundary conditions, 
\begin{align*}\label{dirichletEulerperturbalpha}
-\Delta (1-\alpha^{2}\Delta) \delta \phi=\omega, \mbox{ in } D, \quad
\delta \phi(x,-1)=0, \delta \phi(x,1)=0. \numberthis
\end{align*}
The subspace for the perturbation stream function is now as follows: 
\begin{align*}
Y_{\alpha}=\bigg\{\phi:H^{4}((\mathbb T \times [-1,1]);\mathbb R): &\phi(x,1)=0, \phi(x,-1)=0,\\&  \phi_{yy}(x,1)=0, \phi_{yy}(x,-1)=0\bigg\}.
\end{align*}
One then defines the subspace for the perturbations of velocity as 
\begin{align*}
X_{\alpha}:= & \bigg\{ \bv:H^{3}((\mathbb T \times [-1,1]);\mathbb R^{2}); \div \bv =0, \bv \cdot \bn =0 \mbox{ and } \bn \cdot \nabla \bv \parallel \bn \mbox{ on } y=\pm 1 ;\\&   \int_{-1}^{1}\int_{\mathbb T} (1-\alpha^{2}\Delta)v_{1}(x,y)dx dy=0 \bigg\},
\end{align*} 
where $\bv=(v_{1},v_{2})$.  Lemma \ref{gradperpontoalpha} follows as stated. One can also easily check that if $\phi(x,1)= \phi(x,-1)=0$, then $\int_{\mathbb T} \int_{-1}^{1}u_{1}(x,y)dy dx=0$. Indeed, using \eqref{psiphibc},
\begin{align*}
&\int_{\mathbb T}\int_{-1}^{1}u_{1}(x,y)dy dx = \int_{\mathbb T}\int_{-1}^{1} \psi_{y}(x,y) dy dx = \int_{\mathbb T} (\psi(x,1)-\psi(x,-1)) dx \\&=\int_{\mathbb T} (\phi(x,1)-\phi(x,-1)) dx= \int_{\mathbb T} (0 - 0) dx = 0.
\end{align*}
The proof of Lemma \ref{lapuniqalpha} is modified as follows.
\begin{proof}
Note that $\phi$ satsfies,
\begin{align*} 
&-\Delta(1-\alpha^{2}\Delta) \phi=0, \label{circfixuniq1chalpha} \numberthis\\
&\phi(x,-1)=\phi(x,1)=0, \label{circfixuniq2chalpha} \numberthis\\
&\phi_{yy}(x,-1)=\phi_{yy}(x,1)=0. \label{circfixuniq3chalpha} \numberthis
\end{align*}

Multiply  \eqref{circfixuniq1chalpha} by $\phi$ and integrate over the domain to get
\begin{align*}
0&=\int_{\mathbb T} \int_{-1}^{-1} \phi (-\Delta(1-\alpha^{2}\Delta) \phi) d y dx =  \int_{\mathbb T} \int_{-1}^{-1} \nabla \phi \cdot (\nabla(1-\alpha^{2}\Delta) \phi) d y dx,
\end{align*}
where boundary terms vanish by \eqref{circfixuniq2chalpha}.
Since \eqref{int_by_parts_alph_Eul} also holds true in this case, we have that \[ \int_{D} \nabla \phi \cdot \Delta \nabla \phi  d \mathbf{x} =\int_{D} -\nabla^{\perp} \phi \cdot \Delta (-\nabla^{\perp} \phi)  d \mathbf{x} = \int_{D} \bv \cdot \Delta \bv d \mathbf{x} = -\int_{D} |\nabla \bv|^{2}  d \mathbf{x}, \] 
where $\bv \in X_{\alpha}$ is the unique solution to $\bv=-\nabla^{\perp}\phi$, via Lemma \ref{gradperpontoalpha}.
Thus, we see that, 
\begin{align*} 
&\int_{D} \nabla \phi \cdot (1-\alpha^{2}\Delta) \nabla \phi  d \mathbf{x}= \int_{D} \nabla \phi \cdot \nabla \phi  d \mathbf{x} + \alpha^{2} \int_{D} |\nabla \bv|^{2}  d \mathbf{x} \\&=\int_{D} \bv \cdot \bv  d \mathbf{x} + \alpha^{2} \int_{D} |\nabla \bv|^{2}  d \mathbf{x} = 0,
\end{align*} from which we conclude that $\bv=0$.
It follows by Lemma \ref{gradperpontoalpha} that $-\nabla^{\perp} \phi =0$ on $D$ and hence $\nabla \phi=0$. Then $\phi$ is a constant, and is equal to $0$ by \eqref{circfixuniq2chalpha}.
\end{proof}
Lemma \ref{curl11alpha} follows as stated and Arnold's second theorem also follows as stated. 
\begin{remark}
The stability results considered above implicitly assume that a solution exists for all times $t$. If not, then one has the stability estimate for all times for which the solution exists. This is sometimes referred to in the literature as \textit{conditional stability} (see, for example, \cite[page 7]{HMRW85}).
\end{remark}
\subsection{Examples}
\begin{example}
\textit{Plane parallel shear flows and inflection points.}

(1) Suppose we have a plane parallel shear flow on $\mathbb T \times [-L_{1},L_{2}]$ induced by the profile $\bv^{0}=(V(y),0)$, with $V'(-L_{1})=V'(L_{2})=0$, $\bu^{0}=(U(y),0)$ where $U=V-\alpha^{2}V''$. %
We assume that $U(y)$ has no inflection point on $[-L_{1},L_{2}]$, i.e., $U''(y) \neq 0$ for every $y \in [-L_{1},L_{2}]$. We compute $-F'(\omega^{0}(x,y))= V(y)/U''(y)$. Therefore, as long as $U''(y) \neq 0$, we can always move to a reference frame where $V$ has the same sign as $U''$, i.e., find a constant $c$ such that $V(y)+c$ has the same sign as $U''$. Thus, one can see that $-F'$ satisfies \eqref{K1K2} and one has stability of this steady state by Arnold's first stability theorem \ref{ArnoldIalphaeuler}. We thereby have a sufficient condition for stability for a shear flow $V$, where $U$ does not have any inflection point. Rayleigh criterion for $\alpha$-Euler, see Proposition \ref{prop_Rayl_crit_alpha} and example \ref{rex}, guarantees linear stability for flows such that $U$ has no inflection point. Arnold's stability theorem \ref{ArnoldIalphaeuler} guarantees nonlinear Lyapunov stability in the norm in \eqref{esteuleralpha} thus generalizing appropriately the Rayleigh criterion.

(2) Consider now plane parallel shear flows $V$ such that $U=V-\alpha^{2}V''$ has  inflection points but $V$ and $U''$ have the same sign everywhere.
We can also prove stability of a steady state $\bv^{0}=(V(y),0)$, with $V'(-L_{1})=V'(L_{2})=0$, $\bu^{0}=(U(y),0)$ where $U=V-\alpha^{2}V''$ such that $U''$ changes sign, but $V(y)/U''(y)$ has the same sign. Then, the ratio  $-F'(\omega^{0}(x,y))= V(y)/U''(y)$ is positive everywhere and one obtains stability of this steady state by Arnold's first stability Theorem \ref{ArnoldIalphaeuler}. Note that this generalizes the Fjortoft criterion, see Proposition \ref{prop_F_crit_alpha} and example \ref{fexample}, which guaranteed linear stability of these steady states. 
\end{example}
\begin{example}
\textit{Effect of regularization.}
This example illustrates the effect of regularization on the Arnold criterion. We present an example such that the Arnold stability Theorem \ref{ArnoldIalphaeuler} can be applied to conclude stability of the steady states for $\alpha$-Euler for every $\alpha > 0$ but the corresponding Arnold Theorem for Euler cannot be applied to conclude stability of the steady state for Euler equation obtained, formally, by putting $\alpha=0$ in the steady states for the $\alpha$-Euler. 
Suppose we have a plane parallel shear flow on $\mathbb T \times [-L_{1},L_{2}]$ induced by a profile $\bv^{0}=(V(y),0)$, with $V'(-L_{1})=V'(L_{2})=0$, $\bu^{0}=(U(y),0)$ where $U=V-\alpha^{2}V''$. 
Let $\phi^{0}$ be the stream function of velocity $\bv^{0}$, i.e., $V(y)=(\phi^{0})'(y)$. The boundary condition $V'(-L_{1})=V'(L_{2})=0$ implies that $(\phi^{0})''(L_{1})=(\phi^{0})''(L_{2})=0$. Also, assume that $\phi^{0}=(1+\alpha^{2})\omega^{0}$. Thus, $F'=(1+\alpha^{2})$.  Notice that
 $\omega^{0}=\curl (1-\alpha^{2}\Delta)\bv^{0}= - U'(y)$ Thus \[\omega^{0} = -\partial_{y}((1-\alpha^{2}\partial_{yy})V(y)) =  -\partial_{y}((1-\alpha^{2}\partial_{yy})\partial_{y}\phi^{0})=\alpha^{2}(\phi^{0})''''-(\phi^{0})''.\] 
Since $\omega^{0}=\frac{1}{1+\alpha^{2}}\phi^{0}$, we see that $\phi^{0}$ must satisfy the following differential equation,
\begin{equation}\label{phieqn}
\alpha^{2}(\phi^{0})''''-(\phi^{0})''-\frac{1}{1+\alpha^{2}}\phi^{0}=0, \quad (\phi^{0})''(L_{1})=(\phi^{0})''(L_{2})=0.
\end{equation}
Choose the difference $L_{2}-L_{1}$ in such a way that $-\Delta$ has minimum eigenvalue $1$ on the appropriate space $Y_{\alpha}$. Thus $-\Delta(1-\alpha^{2}\Delta)$ will have minimum eigenvalue $1+\alpha^{2}$. Since we have the inequality $\frac{1}{\lambda_{min,\alpha}}=\frac{1}{1+\alpha^{2}}< F'=1+\alpha^{2}$ for all $\alpha > 0$, by Arnold's second stability theorem \ref{Arnold II alpha}, one has stability of this steady state for all values of $\alpha>0$.
Notice that if we put $\alpha = 0$ and consider this as a steady state for the Euler equations, $\phi^{0}=\psi^{0}=$ and $\phi^{0}=(1+\alpha^{2})\omega^{0}$ becomes $\psi^{0}=\omega^{0}$. Thus $F'=1$ and since the minimum eigenvalue of $-\Delta$ in the appropriate subspace is $1$, the inequality $1/ \lambda_{min} < F'$ cannot be checked and stability of this steady state cannot be concluded by Arnold's second stabilty theorem for the Euler equations. In fact, stability holds in a restricted sense if perturbations are restricted to certain subspace, see \cite[p. 111]{MP94} for more details.
\end{example}
\begin{example}
\textit{Sinusoidal flows.}
One class of steady states for which the regularization seems to have no effect in terms of the Arnold criterion are the oscillating sinusoidal flows, i.e., steady states of the form $\phi^{0}(y)=\sin y$ and $\phi^{0}(y)=\sin my$ where $m > 1$ is an integer. The Arnold stability theorems cannot be used to conclude stability of these steady states for both Euler and $\alpha$-Euler. 
Consider the domain to be the two torus $\mathbb T^{2}$. For example, if $\phi^{0}(y)=\sin y$, then $\bv^{0}(y)=(\cos y,0)$, $\bu^{0}(y)=(1-\alpha^{2}\partial_{yy})\bv^{0}(y) = ((1+\alpha^{2})\cos y,0)$, $\omega^{0}(y)=-\partial_{y}(1+\alpha^{2})\cos y = (1+\alpha^{2})\sin y$. From this we can see that $F'=1/(1+\alpha^{2})$ and in order to check for stability we need $1/ \lambda_{min,\alpha} < 1/ (1+\alpha^{2})$ which does not hold because $\lambda_{min,\alpha}$ of the operator $-\Delta(1+\alpha^{2}\Delta)$ with domain $Y_{\alpha}$ as in Equation \eqref{yalphatorus} is equal to $1+\alpha^{2}$. One thus cannot conclude stability via Arnold's second Theorem \ref{Arnold II alpha} We note here the regularization does not have any effect whatsoever because for the Euler equation if $\psi^{0}(y)=\sin y$, $\bu^{0}=(\cos y,0)$, $\omega^{0}(y)=\sin y$. Thus $F'=1$ and $\lambda_{min}$ of the negative Laplacian $-\Delta$ acting on the appropriate subspace %
is also $1$ and thus one cannot check that $1/F' < \lambda_{min}$ which is required for stability. Thus the regularization doesn't seem to affect the ability of Arnold criterion to predict the stability of the steady state $\sin y$.
Similarly, if $\phi^{0}(y)=\sin my$, then $\bv^{0}(y)=(m \cos my,0)$. Then, $\bu^{0}(y)=(1-\alpha^{2}\partial_{yy})\bv^{0}(y)=(m(1+\alpha^{2}m^{2})\cos my,0)$. $\omega^{0}(y)=-\partial_{y}m(1+\alpha^{2}m^{2})\cos my = m^{2}(1+m^{2}\alpha^{2})\sin my$. From this we can see that $F'=1/ m^{2}(1+m^{2}\alpha^{2})$. One can check that the minimum eigenvalue of $-\Delta(1-\alpha^{2}\Delta)$ on the subspace $Y_{\alpha}$ described in Section \ref{twotorus} is given by $1+\alpha^{2}$. Thus, in order to check for stability we need $1/ \lambda_{min,\alpha}= 1/(1+\alpha^{2}) < F'= 1/ (m^{2}(1+m^{2}\alpha^{2}))$. This inequality cannot be checked for $m>1$ and thus one cannot conclude stability by Arnold's second stability Theorem \ref{Arnold II alpha}. We note here the regularization does not have any effect whatsoever because for in the case of the Euler equation if $\psi^{0}(y)=\sin my$, $\bu^{0}=(m\cos my,0)$, $\omega^{0}(y)=m^{2}\sin my$. Thus $F'=1/ m^{2}$ and $\lambda_{min}$ of the negative Laplacian $-\Delta$ acting on the appropriate subspace %
is also $1$. Thus we need to check if $1 < 1/m^{2}$ which cannot be true if  $m>1$, and thus, similar to the $\alpha$-Euler criterion, even for the Euler case, stability cannot be concluded via the Arnold's second stability theorem. Thus the regularization doesn't seem to affect the ability of Arnold criterion to predict the stability of the steady state $\sin m y$. This leads us to conjecture that these steady states are unstable even for the regularized $\alpha$-Euler equations. For more regarding stability of sinusoidal flows for the Euler equations, see \cite{BFY99}.
\end{example}


\end{document}